\numberwithin{equation}{section}
\newtheorem{thm}{Theorem}[section]
\newtheorem{cor}[thm]{Corollary}
\newtheorem{lem}[thm]{Lemma}
\newtheorem{prop}[thm]{Proposition}
\theoremstyle{definition}
\theoremstyle{remark}
\newtheorem{rem}[thm]{Remark}
\DeclareMathOperator{\SL}{SL}
\DeclareMathOperator{\Aut}{Aut}
\newcommand{\bbC}{\mathbb{C}}
\newcommand{\bbQ}{\mathbb{Q}}
\newcommand{\bbZ}{\mathbb{Z}}
\newcommand{\bbP}{\mathbb{P}}
\newcommand{\bbR}{\mathbb{R}}
\newcommand{\bbN}{\mathbb{N}}
\newcommand{\group}{G}
\newcommand{\borel}{B}
\newcommand{\torus}{T}
\newcommand{\rootSystem}{R}
\newcommand{\halfSum}{\varpi}
\newcommand{\isotropy}{H}
\newcommand{\homo}{\group/\isotropy}
\newcommand{\sphericalLattice}{M}
\newcommand{\realSphericalLattice}{M \otimes \bbR}
\newcommand{\dualLattice}{N}
\newcommand{\realDualLattice}{N \otimes \bbR}
\newcommand{\rationalDualLattice}{N \otimes \bbQ}
\newcommand{\valCone}{\mathcal{V}} 
\newcommand{\linValCone}{\operatorname{Lin}(\valCone)}
\newcommand{\complementLinValCone}{\mathcal{W}}
\newcommand{\colorMap}{\varrho}
\newcommand{\variety}{X}
\newcommand{\lineBundle}{L}
\newcommand{\polVar}{(\variety,\lineBundle)}
\newcommand{\globalSection}{s}
\newcommand{\coloredFan}{\mathcal{F}_{\variety}}
\newcommand{\polytope}{\Delta} 
\newcommand{\relevantRoots}{\rootSystem_{\variety}^+}
\newcommand{\weightSection}{\chi}
\newcommand{\divBinv}{\mathcal{P}_{\variety}}
\newcommand{\homoTC}{\homo\times\bbC^*}
\newcommand{\colorMapTC}{\hat{\colorMap}}
\newcommand{\varietyTC}{\mathcal{X}} 
\newcommand{\lineBundleTC}{\mathcal{L}}
\newcommand{\polVarTC}{(\varietyTC,\lineBundleTC)}
\newcommand{\fiberTC}{\variety}
\newcommand{\globalSectionTC}{\hat{s}}
\newcommand{\polytopeTC}{\hat{\polytope}} 
\newcommand{\sphericalLatticeTC}{\hat{\sphericalLattice}} 
\newcommand{\divBinvTC}{\mathcal{P}_{\varietyTC}}
\newcommand{\coloredFanTC}{\mathcal{F}_{\varietyTC}}
\newcommand{\valConeTC}{\valCone\times\bbR}
\DeclareMathOperator{\MNA}{M^{NA}}
\DeclareMathOperator{\JNA}{J^{NA}}
\DeclareMathOperator{\DF}{DF}
\DeclareMathOperator{\LFunctional}{\mathcal{L}}
\DeclareMathOperator{\JFunctional}{\mathcal{J}}
\newcommand{\lebesgue}{\mathop{d\mu}}
\newcommand{\lebesgueBoundary}{\mathop{d\sigma}}
\newcommand{\scalar}{a}
\newcommand{\DHpol}{P}
\newcommand{\coDHpol}{Q}
\DeclareMathOperator{\Int}{Int}
\newcommand{\smoothNormalized}{\mathcal{N}_0^{\infty}}
\newcommand{\lra}{\longrightarrow}
\newcommand{\supth}[1]{\ensuremath{#1^{\mathrm{th}}}}
\title{Uniform K-stability of polarized spherical varieties}
\author{Thibaut Delcroix}
\address{Univ.\ Montpellier, CNRS, Montpellier, France}
\email{thibaut.delcroix@umontpellier.fr}
\begin{document}

%%%%%%%%%%%%%%%%%%%%%%%%%%%%%%%
% Title page
%%%%%%%%%%%%%%%%%%%%%%%%%%%%%%%

%\removeabove{}
%\removebetween{}
%\removebelow{}

\maketitle

\begin{prelims}

\DisplayAbstractInEnglish

\bigskip

\DisplayKeyWords

\medskip

\DisplayMSCclass

%\bigskip

%\languagesection{Fran\c{c}ais}

%\bigskip

%\DisplayTitleInFrench

%\medskip

%\DisplayAbstractInFrench

\end{prelims}

%%%%%%%%%%%%%%%%%%%%%
% Table of Contents
%%%%%%%%%%%%%%%%%%%%%

\newpage

\setcounter{tocdepth}{1}

\tableofcontents

%%%%%%%%%%%%%%%%%%%%%
% Content begins here
%%%%%%%%%%%%%%%%%%%%%

\section{Introduction}

In the seminal article \cite{Don02}, Donaldson initiated a study of the existence of constant scalar curvature K\"ahler metrics on polarized toric manifolds.  There, he notably introduced the general condition of K-stability, thus formulating a precise version of the Yau--Tian--Donaldson conjecture whose aim is to give an algebro-geometric characterization of the existence of constant scalar curvature K\"ahler metrics on general polarized varieties.  Focusing on toric varieties, he further translated the condition of K-stability into a convex geometric problem, and with additional work concluded in \cite{Don09}, he was able to prove the Yau--Tian--Donaldson conjecture for non-singular toric surfaces.  Together with the case of K\"ahler--Einstein metrics, it is still today some of the most convincing evidence for the conjecture.

The conjecture is still open in general, and examples (see \cite{ACGT08}) indicate that the condition of K-stability should be modified slightly to a condition of \emph{uniform} K-stability which has been introduced and refined by several authors \cite{Sz15,Der16,BHJ17,His1}.  However, the work of Donaldson combined with some more recent advances (notably \cite{CC2}) shows that the uniform Yau--Tian--Donaldson conjecture is true for toric manifolds.  A different proof of this fact has been provided very recently by Li \cite{Li}.  Odaka noticed that the recent work of Li essentially proves the uniform Yau--Tian--Donaldson conjecture for polarized spherical manifolds as well (this is explained in more details in his appendix to the present article).

Motivated by Odaka's remark, we translate the uniform K-stability condition into a convex geometric problem for polarized spherical varieties, in terms of the combinatorial data encoding these.  This translation provides a much wider playground than the toric case where one can try to show the (non-uniform) Yau--Tian--Donaldson conjecture for different classes of varieties, or derive explicit combinatorial conditions for the existence of constant scalar curvature K\"ahler metrics.  We will concentrate here on the second goal and provide a combinatorial sufficient criterion for uniform K-stability which applies to a wide range of polarized spherical varieties.  We intend to present progress on the first goal in another article.\footnote{In the preprint \cite{RK1}, the author has applied the present article to prove the Yau--Tian--Donaldson conjecture for cohomogeneity one manifolds and give a simple combinatorial criterion.}  It should be noted, for the reader more familiar with
K-stability for Fano manifolds, that special test configurations are not expected to be enough, and thus no valuative criterion is expected to hold in the general polarized case. Furthermore, to the author's knowledge, before the present paper, the only manifolds proved to admit a cscK (non-K\"ahler--Einstein) metric by direct K-stability arguments were toric surfaces.

To provide the reader with a better flavor of the convex geometric problem associated to K-stability of spherical varieties, let us first recall the case of toric varieties.  A polarized toric variety \(\polVar\) is an ordered pair formed by a complex normal \(n\)-dimensional projective variety \(\variety\) equipped with an effective action of \((\bbC^*)^n\) and a \((\bbC^*)^n\)-linearized ample line bundle \(L\) on \(X\).  Such  data is fully encoded by a convex polytope \(\polytope\) with integral vertices in \(\bbR^n\), and the correspondence is very explicit: the integral points of \(\polytope\) coincide with the different \((\bbC^*)^n\)-weights of the action of \((\bbC^*)^n\) on the space of global holomorphic sections \(H^0\polVar\).  Let \(\lebesgue\) denote the standard Lebesgue measure on \(\bbR^n\), and let \(\lebesgueBoundary\) denote the measure on \(\partial\polytope\) which coincides on each facet of \(\polytope\) with the Lebesgue measure on its affine span \(V\), normalized to give unit
mass to a fundamental region of the lattice \(\bbZ^n\cap V\).  Donaldson shows that the polarized toric variety \(\polVar\) is K-stable if and only if the following functional is positive for any rational piecewise linear concave function \(g\) on \(\polytope\): 
\begin{equation*} 
\LFunctional(g) = 2 \scalar \int_{\polytope} g \lebesgue - \int_{\partial\polytope} g \lebesgueBoundary,
\end{equation*}
where \(\scalar\) is the real number such that \(\LFunctional\) vanishes on constants.

A polarized spherical variety \(\polVar\) is an ordered pair formed by a normal projective variety \(\variety\) equipped with an action of a connected complex reductive group \(\group\) and a \(\group\)-linearized ample line bundle \(\lineBundle\) on \(\variety\), such that a Borel subgroup of \(\group\) acts with an open orbit on \(\variety\).  Such a variety is encoded as well by combinatorial data, including the data of a convex polytope in a real vector space \(\bbR^r\) (of dimension smaller than \(\variety\) in general), but the definition of these is more involved.  Let us just mention the nature of the problem here; we refer to the body of the paper (mainly Section~\ref{sec_spherical}) for precise definitions.  The functional \(\LFunctional\) from the toric case is modified as
\begin{equation}
\label{eqn_LFunctional}
\LFunctional(g) = \int_{\polytope} 2 g (\scalar \DHpol - \coDHpol) \lebesgue - \int_{\partial\polytope} g \DHpol \lebesgueBoundary,  
\end{equation}
where \(\polytope\) is some convex polytope with rational vertices in \(\bbR^r\), the measure \(\lebesgue\) and \(\lebesgueBoundary\) are defined as in the toric case, \(\DHpol\) and \(\coDHpol\) are polynomials, and the scalar \(\scalar\) is still such that \(\LFunctional\) vanishes on constants.  Let us consider as well the functional
\begin{equation*}
\JFunctional(g) = \int_{\polytope} \left(\max_{\polytope} g - g\right) \DHpol \lebesgue.
\end{equation*}
We add that the polynomial \(\DHpol\) is positive on the interior of \(\polytope\), so that the functional \(\JFunctional\) plays a role of (semi)norm: it is non-negative and vanishes only on constant concave functions.  Our first main statement translates conditions of K-stability into conditions on the functionals above (see Section~\ref{sec_stability}  for a recall on these notions).  Before stating these, let us note that there is additional combinatorial data associated to a polarized spherical variety: its valuation cone, which may for now simply be interpreted as the data of some full-dimensional convex cone \(\valCone\) in \((\bbR^n)^*\) centered at \(0\). We denote by \(\linValCone\) the linear part of \(\valCone\), that is, the largest linear subspace contained in \(\valCone\).

\begin{thm}
\label{thm_Kstab}
\samepage{
Let \(\polVar\) be a polarized \(\group\)-spherical variety. 
Then 
\begin{enumerate}
\item it is \(\group\)-equivariantly K-polystable if and only if \(\LFunctional(g) \geq 0\) for any rational piecewise linear concave function \(g\colon \polytope \to \bbR\) with slopes in \(\valCone\), and equality holds only if \(g\in \linValCone\);
\item and it is \(\group\)-uniformly K-stable if and only if there exists an \(\varepsilon > 0\) such that
\[ \LFunctional(g) \geq \varepsilon \inf_{l \in \linValCone} \JFunctional(g + l) \]
for all rational piecewise linear concave functions \(g\colon \polytope \to \bbR\) with slopes in \(\valCone\).
\end{enumerate}
}
\end{thm}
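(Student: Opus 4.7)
The plan is to reduce K-stability to a combinatorial computation by classifying $\group$-equivariant test configurations of $\polVar$ and then computing the relevant non-Archimedean invariants in terms of the resulting combinatorial data. First I would recall from section~\ref{sec_spherical} that a $\group$-equivariant normal ample test configuration of $\polVar$ is nothing but a polarized $\group\times\bbC^*$-spherical embedding $\polVarTC$ of $\homoTC$ whose generic $\bbC^*$-fiber is $\polVar$. By Luna-Vust theory such embeddings are classified by colored fans in the extended valuation cone, and under this correspondence the moment polytope $\polytopeTC\subset\realSphericalLattice\times\bbR$ of $\polVarTC$ sits above $\polytope$ as the subgraph $\{(y,t):y\in\polytope,\ t\leq g(y)\}$ of a rational piecewise linear concave function $g:\polytope\to\bbR$ whose slopes lie in $\valCone$. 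Conversely, every such $g$ arises in this way.

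Next I would compute the Donaldson-Futaki invariant $\DF$ and the non-Archimedean $J$-functional $\JNA$ of the test configuration attached to $g$. Applying equivariant Hirzebruch-Riemann-Roch to the polarized spherical variety $\polVarTC$ (or, equivalently, taking the asymptotic expansion of $\dim H^0(\varietyTC,k\lineBundleTC)$ via lattice point counts), the two top coefficients are expressed as integrals over $\polytopeTC$ against a Duistermaat-Heckman-type polynomial; by Brion's extension of Kazarnovskii's formula to spherical varieties this polynomial factors as $\DHpol$ times Lebesgue measure, while the subleading coefficient picks up the polynomial $\coDHpol$ coming from the anticanonical divisor of $\varietyTC$, a sum over $\group$-stable prime divisors and colors. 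Integrating the fiber coordinate $t$ out of each vertical slice $[\min_{\polytope}g,\max_{\polytope}g]$ collapses these integrals over $\polytopeTC$ into integrals over $\polytope$ against $g$: the interior contribution becomes $\int_{\polytope}2g(\scalar\DHpol-\coDHpol)\lebesgue$ up to a fixed positive multiple, while the facets of $\polytopeTC$ lying above facets of $\polytope$ yield the boundary term $\int_{\boundary}g\DHpol\lebesgueBoundary$. The requirement that $\LFunctional$ vanishes on constants identifies $\scalar$ with the average scalar curvature that appears in $\DF$.

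A parallel but easier computation expresses $\JNA$ as a positive multiple of $\JFunctional(g)$. The linear subspace $\linValCone$ enters because a one-parameter subgroup of the connected center of $\Aut^{\group}(\homo)$ corresponds precisely to a linear function $l\in\linValCone$, and twisting a test configuration by such a subgroup replaces $g$ by $g+l$ without altering the underlying polarized scheme, so both $\DF$ and $\JNA$ descend to $\bbR$-valued functions on the quotient by $\linValCone$. Consequently, $\group$-equivariant K-polystability translates into $\LFunctional(g)\geq 0$ with equality forcing $g\in\linValCone$, and $\group$-uniform K-stability translates into the existence of $\varepsilon>0$ with $\LFunctional(g)\geq\varepsilon\inf_{l\in\linValCone}\JFunctional(g+l)$ for every admissible $g$, which is exactly what the theorem asserts.

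The main obstacle will be the careful bookkeeping of the anticanonical class of the spherical embedding $\varietyTC$ and the verification that the combinatorial polynomial $\coDHpol$ and the boundary measure $\lebesgueBoundary$, normalized via the spherical lattice $\sphericalLattice$, are exactly the quantities produced by equivariant intersection theory on the total space of the test configuration. This is precisely the step where Donaldson's toric argument does not transfer directly: the facets of a general spherical polytope must be sorted into those corresponding to $\group$-stable prime divisors of $\variety$ and those corresponding to colors of $\homo$; only the former contribute to the boundary integral, while the latter produce the correction term $\coDHpol$ inside the bulk integral, a phenomenon that is invisible in the toric setting.
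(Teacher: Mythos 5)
Your overall architecture is the same as the paper's: encode $\group$-equivariant test configurations as spherical embeddings of $\homoTC$ via concave piecewise linear functions $g$ on $\polytope$ (with the twists acting by $g\mapsto g+l$, $l\in\linValCone$), then compute $\MNA$ and $\JNA$ from the asymptotics of $\dim H^0(\varietyTC,\lineBundleTC^k)$ by slicing the polytope $\polytopeTC$ over $\polytope$. Two points in your write-up are, however, wrong as stated and would derail the computation if carried out literally. First, $\coDHpol$ does not come from the anticanonical divisor of $\varietyTC$ or from a sorting of facets into $\group$-stable divisors versus colors: in the paper it is the second-order term of Weyl's dimension formula $\dim V_{k\weightSection+m}$, entering through the Pukhlikov--Khovanskii expansion \eqref{Pukhlikov-Khovanskii}, and the boundary term $\tfrac12\int_{\partial\polytope}\DHpol\lebesgueBoundary$ runs over the \emph{entire} boundary (the facets lying in root hyperplanes simply contribute zero because $\DHpol$ vanishes there). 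Your proposed mechanism would not reproduce formula \eqref{eqn_coDHpol}. Second, the claim that $\JNA$ descends to the quotient by $\linValCone$ is false and contradicts the statement you are proving: $\JNA\polVarTC_{\eta}$ is a positive multiple of $\JFunctional(g+l)$, which genuinely depends on $l$ --- this is precisely why the definition of $\group$-uniform K-stability takes an infimum over twists, and why part (2) of the theorem has $\inf_{l\in\linValCone}$ on the right-hand side while the left-hand side is the untwisted $\LFunctional(g)$. (Neither is $\MNA$ automatically twist-invariant; its invariance under $\linValCone$ is equivalent to a Futaki-type vanishing condition.) A minor further point: the correspondence requires $g$ to be \emph{positive} and $\polytopeTC$ to be the region $\{(rx,t): x\in\polytope,\ 0\le t\le g(x)\}$, not an unbounded subgraph.
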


We will further give another formulation of the condition of uniform K-stability and use this new formulation to obtain a sufficient criterion.  We now assume that \(\polytope\) contains the origin in its interior.  Let \(\complementLinValCone\) be a linear complement to \(\linValCone\).  Let \(\smoothNormalized\) be the set of all continuous concave functions \(f\) on \(\polytope\), smooth on the interior with differentials in \(\valCone\), such that \(\max f = 0\) and \(d_0f \in \complementLinValCone\).  There exist a non-positive integrable function \(J\) on \(\polytope\) with negative integral and an integrable function \(K\) with integral zero on \(\polytope\) such that the functional \(\LFunctional_s\) defined on \(\smoothNormalized\) by
\begin{equation}
\label{eqn_LFunctional_s}
\LFunctional_s(f) = \int_{\polytope} (f(x) K(x) + d_xf(x) J(x)) \lebesgue(x) 
\end{equation}
coincides with \(\LFunctional\) on \(\smoothNormalized\).  Here, \(d_xf\) denotes the differential of a function \(f\) at \(x\), so that \(d_xf(x)\) is equal to \(\nabla f(x) \cdot x\) if one chooses a Euclidean norm on \(\bbR^n\).  We refer to Section~\ref{sec_new} for the details on the definition of \(K\) and \(J\).

\begin{thm}
\label{thm_sufficient}
Assume that \(K + J \leq 0\). 
Let \(b\) be the element of \(\polytope\) defined by 
\begin{equation*}
\int_{\polytope} (x-b) (K(x)+J(x)) \lebesgue(x) = 0.
\end{equation*}
If\, \(b\) is in the relative interior of\, \(-\valCone^{\vee}\), then \((X,L)\) is \(G\)-uniformly K-stable.
\end{thm}

Despite its simplicity, the above theorem actually provides an explicit and tractable condition to check for a huge family of polarized spherical varieties.  The main evidence for this follows from two facts.  The first one is that, when specialized to the situation coming from an anticanonically polarized spherical variety, all assumptions but the condition on \(b\) are automatically satisfied, and that condition translates to the criterion for K-polystability with respect to \(\group\)-equivariant special test configurations obtained in \cite{DelKSSV}.  In fact, as we will show in Section~\ref{sec_full}, the condition on \(b\) translates to \(\group\)-equivariant K-polystability with respect to special test configurations in all cases.

The second evidence is that we can prove in many cases that if we fix the variety \(\variety\) but vary the polarization near the anticanonical polarization, the condition on \(K+J\) is open. The condition \(K+J\leq 0\) obviously does not look open, and an easier way to show openness is to rely on a stronger condition \(K+J\leq c < 0\), but it is not always possible for spherical varieties; instead we must understand well how \(K+J\) vanishes or converges to zero.  These two facts together show that one obtains an explicit condition to check for the 
existence of constant scalar curvature K\"ahler metrics on smooth Fano varieties, for polarizations close to the anticanonical one (again, for an explicit range).  We will prove this for toroidal horospherical varieties, as well as for (non-Hermitian) symmetric varieties, but we expect this to hold much more generally, and our proof easily adapts to different situations.

Finally, we note that the condition on \(b\) consists of \(\dim(\linValCone)\) closed conditions and \(\dim(\valCone)-\dim(\linValCone)\) open conditions.  In particular, on a K\"ahler--Einstein Fano symmetric (non-Hermitian) manifold such that \(\linValCone=\{0\}\), our theorem shows that there exist cscK metrics in an explicit neighborhood of the anticanonical line bundle.

The paper is organized as follows.  Section~\ref{sec_stability} is devoted to the recollection of notions on K-stability.  Section~\ref{sec_spherical} summarizes key combinatorial properties of spherical varieties.  In Section~\ref{sec_test_config}, we associate to a spherical test configuration a piecewise linear concave function and translate the effect of twisting a test configuration in terms of this function.  We then express, in Section~\ref{sec_NA}, the non-Archimedean functionals for spherical test configurations as functionals on the associated concave functions.  We will show how Theorem~\ref{thm_sufficient} applies to K-stability in Section~\ref{sec_new}; we then prove this theorem in Section~\ref{sec_proof_sufficient}.  We provide a full statement of the sufficient condition for \(\group\)-uniform K-stability thus obtained in Section~\ref{sec_full}.  The remaining three sections are devoted to illustrating the applicability of the sufficient condition.  In
Section~\ref{sec_example}, we apply our criterion to the blowup of the three-dimensional quadric at a one-dimensional subquadric and obtain from this the existence of cscK metrics in an explicit neighborhood of the anticanonical line bundle.  We show that the criterion always applies for the anticanonical line bundle on Fano manifolds in Section~\ref{sec_Fano}.  In the final Section~\ref{sec_open}, we provide examples of statements to the effect that in a wide range of situations, our criterion shows that \(\group\)-uniform K-stability is equivalent to \(\group\)-equivariant K-polystability with respect to special test configurations for polarizations close to the anticanonical line bundle.  The appendix by Yuji Odaka shows that for non-singular spherical varieties, \(\group\)-uniform K-stability is equivalent to existence of cscK metrics.

\subsection*{Acknowledgments}
Y.\,O. would like to thank Y.~Gongyo, C.~Li and S.~Okawa for the nice interactions.

%%%%%%%%%%%%%%%%%%%%%%%%%%%%%%%%%%%%%
\section{Background on K-stability}
\label{sec_stability}

Our references for this section are \cite{BHJ17,His1}. We recall the main notions for the reader's convenience.

Let \(\group\) be a complex reductive group.  Let \(\polVar\) be a \(\group\)-polarized variety.  A (normal, ample) \emph{\(\group\)-equivariant test configuration} for \(\polVar\) consists of the data of a normal \((\group \times \bbC^*)\)-variety \(\varietyTC\), a \((\group \times \bbC^*)\)-linearized ample line bundle \(\lineBundleTC\) on \(\varietyTC\), and a \(\bbC^*\)-equivariant flat morphism \(\pi\colon \polVarTC \to \bbC\) whose fiber \((\fiberTC_1,\lineBundleTC_1)\) over \(1\) is \(\group\)-equivariantly isomorphic to \((\variety, \lineBundle^r)\) for some \(r \in \bbZ_{>0}\).  If the (scheme-theoretic) central fiber is normal, then the test configuration is called \emph{special}.  If the total space of the test configuration is (\(\group\)-equivariantly) isomorphic to \(\variety \times \bbC\), then the test configuration is called a \emph{product test configuration}.

The numerical invariants associated to a test configuration \(\polVarTC\) may be defined in terms of the central fiber \((\fiberTC_0,\lineBundleTC_0)\) as follows.  First note that it is equipped with a \(\bbC^*\)-action induced by the action on the test configuration.  For \(k \in \bbN\), let \(d_k\) denote the dimension \(\dim H^0(\fiberTC_0,\lineBundleTC_0^k)\), let \(\lambda_{1,k},\ldots,\lambda_{d_k,k}\) denote the weights of the \(\bbC^*\)-action on \(H^0(\fiberTC_0,\lineBundleTC_0^k)\), and let \(w_k\) denote the sum of the \(\lambda_{i,k}\).  The quotient \(\frac{w_k}{k d_k}\) admits an expansion in powers of \(k\) at infinity; we will be interested in the first two terms:
\[ \frac{w_k}{k d_k} = F_0 + F_1 k^{-1} + o\left(k^{-1}\right). \]

The \emph{non-Archimedean \(J\)-functional} of a test configuration \(\polVarTC\) is 
\[ \JNA\polVarTC = \sup \{ \lambda_{i,k}/k \mid k \in \bbZ_{>0}, 1 \leq i \leq d_k \} - F_0, \]
and the \emph{Donaldson--Futaki invariant} is 
\[ \DF\polVarTC = -F_1. \]
It is often more convenient to work with the \emph{non-Archimedean Mabuchi functional} \(\MNA\) instead of the Donaldson--Futaki invariant.  Indeed, the latter does not vary linearly with base changes of the form \(z \mapsto z^m\) on the test configurations, while the former does.  Their values coincide when the central fiber is reduced, and the Donaldson--Futaki invariant is always greater than or equal to the non-Archimedean Mabuchi functional.  Furthermore, given any test configuration, there exists a base change such that the resulting test configuration has reduced central fiber, and base change preserves \(\group\)-equivariance.

The \(\group\)-polarized variety \(\polVar\) is called \emph{\(\group\)-equivariantly K-semistable} if \(\MNA\polVarTC \geq 0\) for all \(\group\)-equivariant test configurations, and \emph{\(\group\)-equivariantly K-polystable} if furthermore \(\MNA\polVarTC = 0\) if and only if the test configuration is a product test configuration.  We also use the self-explaining
terminology \emph{\(\group\)-equivariantly K-polystable with respect to special test configurations} and abbreviate it to \emph{\(\group\)-stc K-polystable}.

The total space of a given \(\group\)-equivariant test configuration \(\polVarTC\) may actually be the underlying total space of several different \(\group\)-equivariant test configurations.  Let \(F\) be the group of \((\group\times\bbC^*)\)-equivariant automorphisms of \(\varietyTC\).  It contains the factor \(\bbC^*\).  Let \(\mathfrak{Y}(F)\) denote the set of one-parameter subgroups of \(F\).  Let \(\beta\) denote the character of \(\bbC^*\) of weight \(1\), which is identified with a character of \(F\).  Any one-parameter subgroup \(\eta\in \mathfrak{Y}(F)\) such that \(\beta(\eta)=1\) defines a \(\bbC^*\)-action on \(\polVarTC\), a projection to \(\bbC\), and hence a \(\group\)-equivariant test configuration \(\polVarTC_{\eta}\), in general different from the initial \(\polVarTC\).  The test configuration \(\polVarTC_{\eta}\) is called the \emph{twist by \(\eta\) of the test configuration \(\polVarTC\)}.  Since we can work up to base change, the same notion makes sense for any
element of \(\eta\in\mathfrak{Y}(F)\otimes\bbQ\) with \(\beta(\eta)>0\). One can actually extend the definition to irrational \(\eta\in \mathfrak{Y}(F)\otimes\bbR\) though it is not needed for our paper.

Without involving a group \(\group\), 
\emph{$($J-$)$uniform K-stability} is defined as the existence of a positive constant \(\varepsilon > 0\) such that for all test configurations,
\[ \MNA\polVarTC \geq \varepsilon \JNA\polVarTC. \]
The polarized variety \(\polVar\) is called \emph{\(\group\)-uniformly K-stable} if there exists a positive constant \(\varepsilon >0\) such that for all \(\group\)-equivariant test configurations, 
\[ \MNA\polVarTC \geq \varepsilon \inf_{\eta \in \mathfrak{Y}(F)\otimes\bbQ, ~ \beta(\eta) = 1} \JNA\polVarTC_{\eta}. \]
In other words and up to base changes, on the right-hand side, instead of the \(\JNA\) of the test configuration, we consider the infimum of \(\JNA\) over all test configurations with \((\group\times\bbC^*)\)-isomorphic polarized total space but different projections to \(\bbC\).

%%%%%%%%%%%%%%%%%%%%%%%%%%%%%%
\section{Background on polarized spherical varieties}
\label{sec_spherical}

\subsection{Main notions}\label{sec31}
Our references for this section are \cite{Kno91,Bri89}. We recall the main notions for the reader's convenience.

Let \(\group\) be a complex connected reductive group.  We fix a choice of a Borel subgroup \(\borel \subset \group\) and a choice of a maximal torus \(\torus \subset \borel\).  Let \(\homo\) be a \emph{spherical \(\group\)-homogeneous space}, that is, such that \(\borel\) acts on \(\homo\) with an open orbit.  The \emph{spherical lattice} \(\sphericalLattice\) of \(\homo\) is the subgroup of the group of characters of \(\borel\) consisting of the weights of \(\borel\)-eigenfunctions in the field \(\bbC(\homo)\).  We denote by \(\dualLattice\) the dual lattice: \(\dualLattice = \operatorname{Hom}(\sphericalLattice,\bbZ)\).  Since \(\borel\) has an open orbit, the value of a \(\borel\)-invariant valuation on a \(\borel\)-eigenfunction depends only on the eigenvalue, which is an element of \(\sphericalLattice\).  We denote by \(\colorMap\) the map from the set of \(\borel\)-invariant valuations of \(\bbC(\homo)\) to \(\realDualLattice\).  The image of the subset of \(\group\)-invariant
valuations generates a cosimplicial convex cone \(\valCone\) called the \emph{valuation cone} of \(\homo\).

The \(\group\)-equivariant embeddings of \(\homo\) are in one-to-one correspondence with \emph{colored fans} (see \cite{Kno91} for a detailed exposition of this correspondence).  
Let \(\variety\) be a complete \(\group\)-equivariant embedding of \(\homo\), with colored fan \(\coloredFan\). 
Let \(\divBinv\) be the set of \(\borel\)-stable prime divisors in \(\variety\). 
It is a finite set comprised of closures of codimension one \(\borel\)-orbits in \(\homo\) and of closures of codimension one \(\group\)-orbits, the latter corresponding to colorless rays in \(\coloredFan\). 
We identify such a divisor \(D\) with the induced valuation and thus get an associated element \(\colorMap(D)\) of \(\realDualLattice\) via the map \(\colorMap\). 

Let \(\lineBundle\) be an ample \(\group\)-linearized line bundle on \(\variety\). 
Then \(\group\) acts on the space of holomorphic sections of~\(\lineBundle\). 
Fix a \(\borel\)-eigenvalue \(\globalSection\) for this action, and denote its \(\borel\)-weight by \(\weightSection\). 
The \(\borel\)-invariant Cartier divisor on \(\variety\) defined by \(\globalSection\) is of the form 
\begin{equation}
\label{Cartier}
\sum_{D \in \divBinv} n_D D,   
\end{equation}
and there exists an integral piecewise linear function \(f\) defined on \(\realDualLattice\) such that \((f \circ \colorMap)(D) = n_D\) whenever \(D\) contains a \(\group\)-orbit (in other words, when either \(D\) is \(\group\)-invariant, or \(D\) is a \emph{color} of \(\variety\)).  
Conversely, a divisor as in~\eqref{Cartier} with the same property defines a Cartier divisor. 

The condition that \(\lineBundle\) is ample is equivalent to the condition that the minimal function \(f\) which satisfies the above condition further satisfies the following:
\begin{enumerate}[label=\roman*)]
\item \(f\) is a convex function, 
\item \((f \circ \colorMap)(D) < n_D\) for each \(D\in\divBinv\) which does not contain \(\group\)-orbits, 
\item the slopes of \(f\) on two distinct maximal cones of \(\coloredFan\) are distinct. 
\end{enumerate}

Brion defines a polytope \(\polytope = \polytope\polVar\) associated to \(\globalSection\) as the convex polytope in \(\realSphericalLattice\) defined by the equations \(\colorMap(D)(m) + n_D \geq 0\) for all \(D \in \divBinv\).
Integral points of \(\polytope\) give the decomposition of \(H^0\polVar\) as a \(\group\)-representation. 
More precisely, if \(V_{\lambda}\) denotes an irreducible representation of \(\group\) with highest weight \(\lambda\), then \(H^0\polVar\) is \(\group\)-isomorphic to 
\begin{equation*}
%\label{polytope_representation}
\bigoplus_{m\in \sphericalLattice \cap \polytope} V_{\weightSection + m}, %\weightSection is B-weight of section
\end{equation*}
where we recall that \(\weightSection\) is the \(\borel\)-weight of \(\globalSection\). 

The degree \(\lineBundle^n\) is obtained from this polytope by Brion as follows.  Let \(\rootSystem^+\) denote the positive root system of \((\group,\borel,\torus)\), and let \(\relevantRoots\) denote the set of positive roots that are \emph{not} orthogonal to \(\weightSection + \polytope\).  Then
\begin{equation}
\label{polytope_volume}
\lineBundle^n = n! \int_{\polytope} \DHpol \lebesgue,
\end{equation}
where \(\lebesgue\) is the Lebesgue measure on \(\realSphericalLattice\) normalized by \(\sphericalLattice\), and \(\DHpol\) is the Duistermaat--Heckman polynomial defined by
\begin{equation}
\label{eqn_DHpol}
\DHpol(x) = \prod_{\alpha \in \relevantRoots} \frac{\langle x + \weightSection , \alpha \rangle}{\langle \halfSum , \alpha \rangle}, 
\end{equation} 
where \(\halfSum\) is the half sum of positive roots of \(\group\).  The above result is proved by considering the first-order asymptotic of the dimensions of the spaces of pluri-sections \(H^0(\variety,\lineBundle^k)\) and Weyl's dimension formula
\begin{equation*}
%\label{Weyl_dimension}
\dim(V_{\lambda}) = \prod_{\alpha \in \rootSystem^+} \frac{\langle \alpha, \lambda + \halfSum \rangle}{\langle \alpha, \halfSum \rangle}.
\end{equation*}
Indeed, up to the \(n!\) factor, the highest-order (in \(k\)) coefficient of \(\sum_{m\in \sphericalLattice \cap k \polytope} \dim(V_{k \weightSection + m})\) gives the integral in~\eqref{polytope_volume}, and the polynomial appearing in this volume is the highest-order summand of the polynomial giving the dimension formula, restricted to the affine space \(\weightSection + \realSphericalLattice\).

We will actually need the following refinement, which is a consequence of a general result of Pukhlikov and Khovanski\u{\i} \cite{PK92a}. The dimension \(\dim H^0(\variety,\lineBundle^k)\) admits an expansion in powers of \(k\) of the form
\begin{equation}
\label{Pukhlikov-Khovanskii}
 \dim H^0(\variety,\lineBundle^k) = k^{n} \int_{\polytope} \DHpol \lebesgue + k^{n - 1} \left( \frac{1}{2} \int_{\partial \polytope} \DHpol \lebesgueBoundary + \int_{\polytope} \coDHpol \lebesgue \right) + o(k^{n-1}),
\end{equation}
where 
\begin{equation}
\label{eqn_coDHpol}
\coDHpol(x) = \sum_{\alpha \in \relevantRoots} \frac{\langle \alpha, \halfSum \rangle}{\langle \alpha, x + \weightSection \rangle} P(x)
\end{equation}
and \(\lebesgueBoundary\) denotes the measure on \(\partial \polytope\) which coincides on each facet with the Lebesgue measure normalized by the intersection of \(\sphericalLattice\) with the affine space spanned by the face.  We explain how this result follows from \cite{PK92a} in the next subsection.

\subsection{Second coefficient in the expansion}

The proof of expansion~\eqref{Pukhlikov-Khovanskii} arises from a generalization of Ehrhart's multiplicity theorem, as follows from work of McMullen \cite{McM77} and Khovanski\u{\i}--Pukhlikov \cite{PK92a}. 

Let \(V\) be a real vector space and \(\Lambda\) a lattice in \(V\). 

Khovanski\u{\i} and Pukhlikov's general results  (in particular, \cite[Corollary 2.5]{PK92a}) show that, on the group of virtual polytopes, which consists essentially of formal finite real linear combinations of polytopes equipped with Minkowki addition, the evaluation of a given fixed homogeneous polynomial at integral points of a polytope with vertices in \(\Lambda\) extends to a polynomial function (valuation in the terminology of \cite{PK92a}) from the group of virtual polytopes to \(\bbR\). 
In particular, its restriction to the subgroup generated by a single polytope is a polynomial function. 

Assume that \(f\colon V\to \bbR\) is a homogeneous polynomial of degree \(d\) and \(\hat{\polytope}\) is an \(r\)-dimensional polytope in \(V\) with vertices in \(\Lambda\); then it follows from the results quickly summarized above that the function
\[ k\in\bbZ_{>0} \longmapsto \sum_{x\in \Lambda\cap k\hat{\polytope}} f(x) \] 
is the restriction of a polynomial function \(\mathcal{F}\) from \(\bbR\) to \(\bbR\). 
We note that 
\[ \mathcal{F}(k) = k^d \sum_{x\in \frac{1}{k}\Lambda\cap \hat{\polytope}} f(x) = k^{d+r}\sum_{x\in \frac{1}{k}\Lambda\cap \hat{\polytope}} \frac{f(x)}{k^r}, \]
and the second sum converges to \(\int_{\hat{\polytope}} f \lebesgue\) for the Lebesgue measure \(\lebesgue\) on the affine span of \(\hat{\polytope}\), normalized by the lattice; hence the polynomial \(\mathcal{F}\) is a degree \(d+r\) polynomial whose degree \(d+r\) coefficient is \(\int_{\hat{\polytope}} f \lebesgue\).

The formula for the second coefficient of the polynomial \(\mathcal{F}\) follows from Minkowski's inversion in the group of virtual polytopes \cite[Theorem 2.2]{PK92a}: the inverse of the polytope \(\hat{\polytope}\) in this group is \((-1)^{r}\Int(-\hat{\polytope})\) (which can be interpreted as the formal sum \(\sum_{F}(-1)^{\dim(F)}(-F)\), where \(F\) runs over all faces of \(\hat{\polytope}\) and \(-F\) denotes the polytope symmetric to \(F\) with respect to the origin in \(V\)).  It thus follows from the polynomiality theorem that the polynomial function \(\mathcal{F}\) admits the following expression at negative integers:
\[\mathcal{F}(-k) = (-1)^{r}\sum_{x\in \Lambda\cap \Int(-k\hat{\polytope})} f(x) 
= (-k)^{r+d}\sum_{x\in \frac{1}{k}\Lambda \cap \Int(\hat{\polytope})} \frac{f(x)}{k^r}. \]

Write \(\mathcal{F}(k)=\sum_{j=0}^{r+d} a_{j}k^{j}\).  We already know that \(a_{r+d}=\int_{\hat{\polytope}} f \lebesgue\), and we want to find the second coefficient.  Consider the sum
\[ \mathcal{F}(k)-(-1)^{r+d}\mathcal{F}(-k) = \sum_{j=0}^{r+d} \left(a_{j}-(-1)^{r+d+j}a_j\right)k^{j}; \]
then of course its highest possibly non-zero coefficient is the coefficient of \(k^{r+d-1}\). 
On the other hand, in  view of previous formulas, the left-hand side may be interpreted as 
\[ \mathcal{F}(k)-(-1)^{r+d}\mathcal{F}(-k) = k^{r+d-1}\sum_{x\in \frac{1}{k}\Lambda \cap \partial \hat{\polytope}} \frac{f(x)}{k^{r-1}}. \]
We deduce from the two equalities that 
\[2a_{r+d-1} = \lim_{k\to \infty} \frac{\mathcal{F}(k)-(-1)^{r+d}\mathcal{F}(-k)}{k^{r+d-1}} = \int_{\partial \hat{\polytope}} f \mathop{d\sigma},\]
where \(\mathop{d\sigma}\) is the measure on the boundary which, restricted to a facet, coincides with the translate of the Lebesgue measure on the linear space spanned by the facet, normalized by the induced lattice.

To get the expansion~\eqref{Pukhlikov-Khovanskii}, we apply this to the two highest-degree homogeneous components in the Weyl dimension formula.  One should however be careful in choosing the right space to apply this.  Consider the vector space \(V:= (M\otimes \bbR) \oplus \bbR\), equipped with the lattice \(\Lambda:= \{(m,k)\mid m-k\chi \in M\}\), where \(\chi\) is the \(B\)-weight of a fixed \(B\)-eigensection \(s\) of \(L\).  Consider the \(r\)-dimensional polytope
\[ \hat{\polytope}=\{(x,1)\mid x\in \polytope \}, \]
where \(\polytope\) is the polytope associated to \((X,L)\) as in Section~\ref{sec31} 
Consider the degree \(d\) polynomial \(f\) extending the Weyl dimension formula applied to the weights \(x+t\chi\) for \(x\in M\)  and \(t\in \bbZ_{\geq 0}\), 
\[ f\colon V\lra \bbR,\quad (x,t)\longmapsto \prod_{\alpha\in \rootSystem} \frac{\langle\alpha,x+t\chi+\varpi \rangle}{\langle \alpha, \varpi\rangle}, \]
and let \(f_d\) and \(f_{d-1}\) denote its highest-degree homogeneous components. 
Note that \(P(x)=f_d(x,1)\) and \(Q(x)=f_{d-1}(x,1)\). 
Applying the results of \cite{PK92a} and the argument to compute the second coefficient described above yields the expansion~\eqref{Pukhlikov-Khovanskii}.

\section{Test configurations for polarized spherical varieties}
\label{sec_test_config}

\subsection{Statement}
In this section, we encode equivariant test configurations for polarized spherical varieties by certain concave piecewise linear functions.  In addition to Donaldson's work on toric varieties \cite{Don02}, this task has already been accomplished in different special cases; see \cite{AB04,Nyb,DelKSSV}.  We freely use notation from Section~\ref{sec_spherical}.

\begin{thm}
\label{thm_TC}
For a polarized spherical variety \(\polVar\), \(\group\)-equivariant test configurations are in one-to-one correspondence with positive rational piecewise linear concave functions on \(\polytope\polVar\), with slopes in the valuation cone of\, \(\variety\).

The test configuration is furthermore special if the associated function is integral linear, and it is a product test configuration if it is integral linear, with slope in \(\linValCone\).

Furthermore, rational twists of a given test configuration \(\polVarTC\) are in one-to-one correspondence with elements of\, \(\linValCone\cap\rationalDualLattice\), and if \(g\) is the function associated to \(\polVarTC\), then the set of functions corresponding to the twists is \(\{g + l \mid l \in \linValCone\cap\rationalDualLattice\}\).
\end{thm}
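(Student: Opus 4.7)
The plan is to realize the total space of a $\group$-equivariant test configuration as a polarized $(\group\times\bbC^*)$-spherical variety of one larger dimension, and then read off the announced data from the combinatorial description recalled in section~\ref{sec_spherical}. The concave function $g$ appears as the upper envelope of the resulting polytope above $\polytope$.

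First, I would check that $\varietyTC$ is naturally a normal $(\group\times\bbC^*)$-equivariant embedding of $\homoTC$: normality is part of the definition of a test configuration, and the open $(\borel\times\bbC^*)$-orbit is provided by the $\group$-equivariant isomorphism $\pi^{-1}(\bbC^*)\cong\variety\times\bbC^*$. Standard computations identify the combinatorial invariants of $\homoTC$: spherical lattice $\sphericalLatticeTC=\sphericalLattice\oplus\bbZ$, valuation cone $\valConeTC$, and color set identified with $\colorSet$. Section~\ref{sec_spherical} then associates to the ample $(\group\times\bbC^*)$-linearized line bundle $\lineBundleTC$ a polytope $\polytopeTC\subset\realSphericalLattice\oplus\bbR$. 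The extra data, namely the flat projection $\pi:\varietyTC\to\bbC$, is a $(\borel\times\bbC^*)$-semi-invariant regular function of weight $(0,1)\in\sphericalLatticeTC$; its flatness combined with the isomorphism $\pi^{-1}(1)\cong(\variety,\lineBundle^r)$ forces $\polytopeTC$ to project onto $r\polytope$. Matching the weight $(0,1)$ of $\pi$ with its order of vanishing on each $\group$-invariant divisor of $\varietyTC$ yields the expected shape
\[ \polytopeTC=\{(x,t) \mid x\in r\polytope,\ 0\leq t\leq \tilde g(x)\} \]
for some function $\tilde g$ on $r\polytope$. The three conditions i)--iii) of section~\ref{sec_spherical} characterizing ample polarizations translate exactly into $\tilde g$ being a positive, rational, piecewise linear concave function with slopes in $\valCone$; rescaling by $r$ gives the function $g$ of the statement.

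The three remaining assertions follow from direct inspection of $\polytopeTC$. The central fiber is the union of $\group$-invariant divisors corresponding to the facets of $\polytopeTC$ lying strictly above $\{t=0\}$, and its irreducibility and normality (the \emph{special} property) amount to there being a single such facet, equivalently to $g$ being integral linear. A product test configuration is exactly one for which the $\bbC^*$-action on $\varietyTC\cong\variety\times\bbC$ along the first factor comes from a one-parameter subgroup of $\Aut^{\group}(\variety)^\circ$, whose cocharacter lattice canonically identifies with $\linValCone\cap\dualLattice$, which in our picture corresponds to an integral linear $g$ with slope in $\linValCone$. For the twist statement, rational one-parameter subgroups of the maximal torus of $(\group\times\bbC^*)$-equivariant automorphisms of $\varietyTC$ with $\beta(\eta)=1$ are in bijection with $\linValCone\cap\rationalDualLattice$ (the $\bbC^*$-factor of $F$ providing the normalization $\beta(\eta)=1$), and a direct computation shows that twisting by $l$ shears $\polytopeTC$ so that its upper envelope becomes the graph of $g+l$.

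The main obstacle I anticipate is the faithful translation between the abstract combinatorial data of polarized spherical varieties as in section~\ref{sec_spherical} (colored fans together with a convex integral piecewise linear function on $\realDualLattice$) and the concrete ``polytope above $\polytope$'' picture used here, particularly verifying that the three ampleness conditions of section~\ref{sec_spherical} really correspond, without sign or envelope ambiguity, to positivity and concavity of $g$ with slopes in $\valCone$. Portions of this computation have appeared in \cite{AB04,Nyb,DelKSSV} for particular subclasses of spherical varieties, so the work mostly consists of careful bookkeeping adapted to the general spherical setting.
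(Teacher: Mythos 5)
Your overall strategy -- viewing the total space as a $(\group\times\bbC^*)$-spherical variety and reading off $g$ as the upper envelope of the associated polytope over $r\polytope$ -- is exactly the paper's, but there are two concrete gaps. First, you apply the polytope machinery of section~\ref{sec_spherical} directly to $\pi\colon\varietyTC\to\bbC$, whereas that machinery is stated for \emph{complete} embeddings with ample polarization; over the affine base the spaces $H^0(\varietyTC,\lineBundleTC^k)$ are infinite-dimensional $\bbC[z]$-modules and Brion's polytope is not defined. The paper first glues in the trivial family at $\infty$ to get a complete polarized $(\group\times\bbC^*)$-spherical variety over $\bbP^1$; it is precisely the divisor $\fiberTC_\infty$ (the unique $\borel\times\bbC^*$-stable divisor with $\colorMapTC$ in $\valCone\times\bbR_{>0}$, carrying coefficient $0$ for the chosen section) that produces the inequality $t\geq 0$ in $\polytopeTC$. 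Without the compactification your region is unbounded in the $t$-direction and the claimed shape $\{(x,t)\mid x\in r\polytope,\ 0\leq t\leq \tilde g(x)\}$ does not follow from ``matching the weight of $\pi$ with its vanishing orders'' alone.

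Second, your proposal only treats the direction from a test configuration to a concave function. For a one-to-one correspondence you must also construct, from an arbitrary positive rational piecewise linear concave $g$ with slopes in $\valCone$, a colored fan $\coloredFanTC$ and an ample $(\borel\times\bbC^*)$-stable divisor realizing $\polytopeTC$. The delicate point, which the paper flags explicitly, is the assignment of color sets to the new maximal cones coming from the normal fan of $\polytopeTC$: the colors of $\varietyTC$ differ from those of $\variety$ in general, and an arbitrary choice need not yield an ample line bundle. The paper's prescription (include a color $D$ in the colored cone at the vertex $m_\sigma$ exactly when $-\colorMap(\overline{D})(m_\sigma)+n_D>0$) is tailored to guarantee condition ii) of the ampleness criterion; some such argument is needed and is absent from your sketch. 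The remaining assertions (special, product, twists) are handled in your proposal essentially as in the paper, with the twist statement resting on the same identification of $\mathfrak{Y}(F)\otimes\bbR$ with $\linValCone$.
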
 

The correspondence is explicitly described in the proof below. 
The key picture to keep in mind is that the polytope associated (as in \cite{Bri89}) to a trivially compactified test configuration can be described as the set of point below the graph of a concave integral piecewise linear function on some multiple of the polytope associated to the initial polarized variety.  

\subsection{From a test configuration to a concave function\ldots}

Let \(\polVar\) be a polarized spherical variety.  Let \(\polVarTC\) be a \(\group\)-equivariant test configuration for \(\polVar\).  We still denote by \(\polVarTC\) the \emph{trivially compactified} \(\group\)-equivariant test configuration for \(\polVar\).  That is, we glue the trivial family over \(\bbC\) to \(\polVarTC\) along \(\bbC^*\) to obtain a family over \(\bbP^1\).  We denote the point added to \(\bbC\) by \(\infty\) and keep the notation \(\polVarTC\) for the family over \(\bbP^1\).

Note that \(\polVarTC\) is a polarized spherical variety under the action of \(\group\times\bbC^*\) (this remark will also be used in Odaka's appendix).  Its open orbit is \(\homo\times\bbC^*\), and the combinatorial data is easily derived from that of~\(\variety\).

Let \(\globalSectionTC\) be the \(\bbC^*\)-invariant meromorphic section of \(\lineBundleTC\) whose restriction to \((\variety,\lineBundle^r) = (\fiberTC_1, \lineBundleTC_1)\) coincides with~\(\globalSection^{\otimes r}\).  The divisor associated to \(\globalSectionTC\) is \((\borel \times \bbC^*)\)-stable, hence an integral linear combination of the form
\begin{equation}
\label{divisorTC}
\sum_{\hat{D} \in \divBinvTC} n_{\hat{D}} \hat{D}, 
\end{equation}
where \(\divBinvTC\) is the set of prime \((\borel\times\bbC^*)\)-stable divisors on \(\varietyTC\). 

There are three types of such divisors: 
\begin{itemize}
\item Each divisor \(\hat{D} \in \divBinvTC\) with \(\colorMapTC(\hat{D}) \in (\realDualLattice \times \{0\})\) must be of the form \(\hat{D}= \overline{D \times \bbC^*}\) for some \(D \in \divBinv\).  Then by our choice of section, \(n_{\hat{D}} = r n_D\).  Note that since the Borel subgroup of \(\bbC^*\) is \(\bbC^*\) itself, all other elements of \(\divBinvTC\) must be \((\group\times\bbC^*)\)-stable.
\item There is only one divisor \(\hat{D}\in \divBinvTC\) such that \(\colorMapTC(\hat{D}) \in (\valCone \times \bbR_{> 0})\); this is the fiber \(\fiberTC_{\infty}\) because \(\polVarTC\) is trivial at \(\infty\).  
The choice of section, on the other hand, implies that for this divisor, \(n_{\hat{D}} = 0\). 
\item Let \(\mathcal{A}\) denote the set \(\colorMapTC(\divBinvTC) \cap (\valCone \times \bbR_{< 0})\), and write each element in \(\mathcal{A}\) as an ordered pair \((u,s)\) with \(u\in\valCone\) and \(s\in \bbR_{< 0}\). 
Let us also write \(n_{u,s}\) for the corresponding coefficient in~\eqref{divisorTC}.
\end{itemize}

Let \(\polytopeTC\) denote the polytope in \(\sphericalLatticeTC \otimes \bbR\) associated to the divisor~\eqref{divisorTC}. 
In view of the previous description of the divisor, the polytope can be described as 
\[ \polytopeTC = \{ (r x,t) \mid x \in \polytope, 0 \leq t \leq g(x) \}, \]
where \(g\) is a (positive) rational concave piecewise linear function on \(\polytope\), expressed as 
\[ g(x) = \inf_{(u,s) \in \mathcal{A}} \left( \frac{r u(x) + n_{u,s}}{-s} \right). \] 
Note that since each \(s\) is negative, the slopes \((-s)^{-1}ru\) are rational points of the valuation cone \(\valCone\). 

\begin{rem}
It may not seem natural that the positive direction corresponds to \(\infty\) for the reader accustomed to a certain point of view on toric varieties. 
It stems from the fact that, under the action \(w\cdot f(z) = f(w^{-1}z)\), the function \(f\colon z\mapsto z^k\) is a \(\bbC^*\)-eigenvector with eigenvalue the one-parameter subgroup \(\chi\colon w \mapsto w^{-k}\) rather than \(f\) itself. 
\end{rem}

\subsection{\ldots and back}

We now explain how to reconstruct a test configuration from a concave function.
Let \(g\) be a positive rational piecewise linear concave function on \(\polytope\polVar\), with slopes in the valuation cone of \(\variety\). 
We can find a positive integer \(r\), a subset \(\mathcal{A}\) of \(\valCone \times \bbZ_{< 0}\), and integers \(n_{u,s}\) for all \((u,s) \in \mathcal{A}\) such that 
\[ g(x) = \inf_{(u,s) \in \mathcal{A}} \left( \frac{r u(x) + n_{u,s}}{-s} \right) \]
and \(u\) is a primitive element of \(\dualLattice\) for all \((u,s)\in\mathcal{A}\). 

Consider the polytope 
\[ \polytopeTC = \{ (r x,t) \mid x \in \polytope, 0 \leq t \leq g(p) \}. \]
We build a colored fan \(\coloredFanTC\) for the \((\group\times\bbC^*)\)-homogeneous space \(\homo\times\bbC^*\) from \(\polytopeTC\) as follows. 
Recall that we are given \(\polVar\) and the corresponding divisor~\eqref{Cartier}, and that colors of \(\homo\times\bbC^*\) may be identified with colors of \(\homo\). 
We first include in \(\coloredFanTC\) the colored cones \((\sigma \times \{0\},S)\) and \((\sigma \times \bbR_{> 0},S)\), where \((\sigma,S)\) is a colored cone of \(\coloredFan\). 
These account for the trivial family over \(\bbC^*\cup\{\infty\}\). 
To complete the fan, we add, for each cone \(\sigma\) in the opposite of the normal fan to \(\polytopeTC\) %
which has not been considered yet and whose intersection with the interior of \(\valConeTC\) is non-empty, a colored cone \((\sigma,S) \in \coloredFanTC\), where \(S\) is defined as follows. 
It suffices to define it for maximal colored cones. %as axioms of faces of colored cones impose that of faces of maximal cones 
For such a cone \(\sigma\), let \(m_{\sigma}\) denote the corresponding vertex of \(\polytopeTC\). 
Then \(S\) is the set of all colors \(D\) of \(\homo\times\bbC^*\) in \(\sigma\) such that \(-\colorMap(\overline{D})(m_{\sigma}) + n_D > 0\). 

We have thus defined a colored fan, hence an embedding \(\varietyTC\) of \(\homo\times\bbC^*\). 
As follows from the description of equivariant morphisms between spherical varieties, see \cite[Theorem~4.1]{Kno91}, %Knop
\(\varietyTC\) admits a \(\bbC^*\)-equivariant surjective morphism to \(\bbP^1\), which induces a trivial family with fiber \(\variety\) over the affine chart \(\bbC^* \cup \{\infty\}\). 
We identify this subvariety with \(\variety\times(\bbC^*\cup\{\infty\})\).

The polytope \(\polytopeTC\) is the polytope associated to the \((\borel\times\bbC^*)\)-stable Cartier divisor 
\[ d = \sum_{\hat{D}\in \divBinv} n_{\hat{D}} \hat{D},\]
where 
\begin{itemize}
\item \(n_{\hat{D}} = r n_D\) for each divisor \(\hat{D} \in \divBinvTC\) with \(\colorMapTC(\hat{D}) \in (\realDualLattice \times \{0\})\) (equivalently \(\hat{D}= \overline{D \times \bbC^*}\) for some \(D \in \divBinv\));   
\item \(n_{\hat{D}} = 0\) for the only divisor \(\hat{D}\in \divBinvTC\) such that \(\colorMapTC(\hat{D}) \in (\valCone \times \bbR_{> 0})\), which is the fiber \(\fiberTC_{\infty}\); 
\item \(n_{\hat{D}} = n_{u,s}\) for \((u,s)\in \mathcal{A}\) and \(\hat{D}\) the \(\group\)-stable divisor which is the closure of the codimension one \(\group\)-orbit associated to the colorless ray generated by \((u,s)\). 
\end{itemize}
In particular, the restriction of this divisor to \(\variety\times(\bbC^*\cup\{\infty\})\) is the product of the divisor~\eqref{Cartier} with \(\bbC^*\cup\{\infty\}\). 
Furthermore, this divisor satisfies the ampleness assumption. 
This is not obvious since the colors of \(\varietyTC\) are different from the colors of \(\variety\) in general, but our choices of colors for each colored cone were tailored to ensure ampleness. 
The associated line bundle \(\mathcal{O}(d)\) is \((\group\times\bbC^*)\)-linearizable (maybe up to passing to a suitable finite tensor power, which does not seriously affect our statement). 
Choosing the linearization such that the natural section \(\globalSection\) of \(\mathcal{O}(d)\) is \(\bbC^*\)-invariant and has \(\borel\)-weight \(\weightSection\)
yields the final identification of \(\mathcal{O}(d)\) with the pullback of \(\lineBundle\) by the first projection on \(\variety\times(\bbC^*\cup\{\infty\})\).
This concludes the construction of the test configuration \(\polVarTC\). 

\subsection{Effect of twisting}

We now elucidate the different possible twists of a given test configuration, as involved in the definition of \(\group\)-uniform K-stability. 
For a spherical homogeneous space \(\homoTC\), one can easily identify \(\Aut_{\group\times\bbC^*}(\homoTC)\): it is the group \(N_{\group\times\bbC^*}(\isotropy\times\{1\})/(\isotropy\times\{1\})\), acting on the right on \(\homoTC\). 
%see for example [Tim11, Proposition 1.2]
Furthermore, this group is diagonalizable, and the action of its neutral component \(F\) extends to any embedding. 
Finally, \(\mathfrak{Y}(F)\times \bbR\) may be identified with the linear part \(\linValCone\) of the valuation cone of \(\homoTC\). 
% [BP87, Proposition 5.3]

The above is not actually necessary since we can identify the possible twists directly by the theory of spherical embeddings. 
Indeed, the colored fans of the twist of a test configuration and of the initial test configurations are the same, as is the combinatorial data identifying the line bundle. 
Note that what we just wrote is true for the total space of the test configuration itself but not for the compactification, which depends on the twist. 
The only difference is thus that the privileged direction coming from the factor \(\bbC^*\) can be chosen differently, and that will affect the final expression of \(g\). 

More precisely, the direction can be chosen arbitrarily among those directions in \((\linValCone \cap \dualLattice) \times \{1\}\), or \((\linValCone \cap (\rationalDualLattice)) \times \{1\}\) to allow for rational twists. 
The effect on \(g\) is by adding the function \(l\) for some element \(l\) of \(\linValCone \cap \rationalDualLattice\). 
This concludes the proof of Theorem~\ref{thm_TC}.

\section{Non-Archimedean functionals for spherical test configurations}
\label{sec_NA}

%\subsection{Statement}

In this section, we compute the non-Archimedean Mabuchi functional and the non-Archimedean \(J\)-functional of the test configuration \(\polVarTC\) associated to the concave function \(g\). 
The computation follows the method of \cite{Don02} and was previously used to obtain sub-cases of our result in \cite{Nyb,AK05}.

We use the notation of Section~\ref{sec_spherical}, and we set  
\begin{equation*}
V := \int_{\polytope} \DHpol \lebesgue
\quad\text{and}\quad \scalar := \frac{1}{2 V} \left( \int_{\partial \polytope} \DHpol \lebesgueBoundary + 2 \int_{\polytope} \coDHpol \lebesgue \right).
\end{equation*}

\begin{thm}
\label{thm_NA}
We have 
\[ \MNA\polVarTC = \frac{1}{V}\left( \scalar \int_{\polytope} g \DHpol \lebesgue - \frac{1}{2} \int_{\partial \polytope} g \DHpol \lebesgueBoundary - \int_{\polytope} g \coDHpol \lebesgue \right) \] 
and
\[ \JNA\polVarTC = \frac{1}{V} \int_{\polytope} \left(\max_{\polytope} g - g\right) \DHpol \lebesgue . \]
\end{thm}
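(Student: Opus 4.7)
The plan is to realize the trivially compactified test configuration $\bar{\varietyTC}$ as a $(G \times \bbC^*)$-spherical variety with polytope $\polytopeTC = \{(rx, t) : x \in \polytope, 0 \leq t \leq g(x)\}$ and weight section of $B \times \bbC^*$-weight $(r\chi, 0)$, then to extract the invariants $F_0, F_1$ from the Pukhlikov--Khovanskii expansion~\eqref{Pukhlikov-Khovanskii} applied to $\bar{\varietyTC}$. Because the positive roots of $G \times \bbC^*$ coincide with those of $G$, the Duistermaat--Heckman polynomials of $\bar{\varietyTC}$ depend only on the first coordinate: $\hat{P}(y, t) = r^{|\relevantRoots|} P(y/r)$ and $\hat{Q}(y, t) = r^{|\relevantRoots|-1} Q(y/r)$, which enables Fubini computations across the $t$-direction.

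The bridge between the Hilbert data $(d_k, w_k)$ and polytope integrals is the identity $h^0(\bar{\varietyTC}, \bar{\lineBundleTC}^k) = d_k + w_k$, obtained by decomposing the direct image $\pi_* \bar{\lineBundleTC}^k$ on $\bbP^1$ as $\bbC^*$-equivariant line bundles whose fibers at $\infty$ carry trivial weight by the trivial compactification. For $\JNA$: Fubini yields $\int_{\polytopeTC} \hat{P}\, d\hat{\mu} = r^n \int_\polytope g P\, d\mu$, so dividing by the leading term $r^n V$ of $d_k = \dim H^0(\variety, \lineBundle^{rk})$ gives $F_0 = \int g P\, d\mu / V$; since the $\bbC^*$-weights $\lambda_{i,k}$ are the $t$-coordinates of lattice points of $k \polytopeTC$ (bounded by $\lfloor k \max g \rfloor$), we have $\sup \lambda_{i,k}/k \to \max_\polytope g$, yielding the claimed formula for $\JNA\polVarTC$.

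The main obstacle is the $\MNA$ formula. Extracting $-F_1$ from the next-order expansion of $w_k / (k d_k)$ involves the second-order coefficient $\hat{C} = \frac{1}{2} \int_{\partial \polytopeTC} \hat{P}\, d\hat{\sigma} + \int_{\polytopeTC} \hat{Q}\, d\hat{\mu}$, which I would compute by decomposing $\partial \polytopeTC$ into: the bottom face $r\polytope \times \{0\}$ (contributing $r^n V$); the side cylinders $rE \times [0, g|_E]$ over facets $E$ of $\polytope$ (contributing $r^{n-1} \int_{\partial \polytope} g P\, d\sigma$); and the top facets, which are graphs of the piecewise linear pieces of $g$. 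The top is the delicate step: each top facet has primitive outer normal $(u_i, s_i) \in \dualLattice \oplus \bbZ$ arising from the representation $g = \inf (r u_i \cdot x + n_{u_i, s_i}) / (-s_i)$, and its lattice-normalized surface measure, pulled back to the base polytope, carries an extra factor $1/|s_i|$ recording the multiplicity with which that component appears in the central fiber. This factor is precisely what separates $\DF = -F_1$ from $\MNA$.

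To isolate $\MNA$, I would first treat the reduced case (all $|s_i| = 1$, equivalently $g$ has integer slopes): each top facet has primitive normal $(u_i, -1)$, and the unimodular shear $(y, t) \mapsto (y, t - u_i \cdot y / r)$ flattens it onto a horizontal face preserving lattice volumes, so the top also contributes $r^n V$. Combining all pieces using $a V = \frac{1}{2} \int_{\partial \polytope} P\, d\sigma + \int_\polytope Q\, d\mu$ yields the stated formula for $\MNA$, and it agrees with $\DF$ because the central fiber is reduced. The general rational case follows by the linearity of $\MNA$ under base change $g \mapsto m g$ with $m = \operatorname{lcm}_i |s_i|$: the function $m g$ has integer slopes so the reduced case applies, and since the right-hand side of the claimed formula is manifestly linear in $g$, the identity extends to arbitrary rational $g$.
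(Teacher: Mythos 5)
Your proposal is correct and follows essentially the same route as the paper: trivially compactify, realize the total space as a $(G\times\bbC^*)$-spherical variety with polytope $\polytopeTC$, use the identity $\dim H^0(\varietyTC,\lineBundleTC^k)=d_k+w_k$, apply the Pukhlikov--Khovanskii expansion, and decompose $\partial\polytopeTC$ into bottom, vertical and graph parts, reducing to integral slopes (reduced central fibre) by base change and linearity of $\MNA$. The only cosmetic differences are that you derive the bridge identity from the equivariant splitting of $\pi_*\lineBundleTC^{k}$ rather than Donaldson's restriction exact sequences, and that you make explicit the $1/|s_i|$ multiplicity factor and the unimodular shear which the paper leaves implicit in its remark that the graph must be cut out by integral linear forms.
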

With the notation of the introduction, we thus have \(2 V \MNA\polVarTC = \LFunctional(g)\) and \(V \JNA\polVarTC = \JFunctional(g)\). 
We provide an expression for \(\MNA\) rather than for the Donaldson--Futaki invariant. 
This is because it is linear with respect to base change, so it is enough to compute it up to base change. 
We can thus in particular assume that the test configuration is reduced and use the definition of the Donaldson--Futaki invariant recalled in Section~\ref{sec_stability}. 
This reduction will appear during the proof. 

%\subsection{Proof}
\begin{proof}
Let \(\fiberTC_x\) denote the fiber of the test configuration for \(x \in \bbP^1 = \bbC \cup \{\infty\}\). 
Donaldson uses the following exact (for large enough \(k\)) sequences of \(\bbC^*\)-representations obtained by restriction of sections:
\[ 0 \lra H^0(\varietyTC,\lineBundleTC^k \otimes \mathcal{O}(-\fiberTC_{0})) \lra H^0(\varietyTC,\lineBundleTC^k) \lra H^0(\fiberTC_{0}, \lineBundleTC^k|_{\fiberTC_{0}}) \lra 0,  \]
\[ 0 \lra H^0(\varietyTC,\lineBundleTC^k \otimes \mathcal{O}(-\fiberTC_{\infty})) \lra H^0(\varietyTC,\lineBundleTC^k) \lra H^0(\fiberTC_{\infty}, \lineBundleTC^k|_{\fiberTC_{\infty}}) \lra 0.  \]
Donaldson further notes that the \(\bbC^*\)-action on \(H^0(\fiberTC_{\infty}, \lineBundleTC^k|_{\fiberTC_{\infty}})\) is trivial and that the family of weights of the \(\bbC^*\)-representation \(H^0(\varietyTC,\lineBundleTC^k\otimes \mathcal{O}(-\fiberTC_{\infty}))\) is \((\lambda_i + 1)_{i\in I}\) if \((\lambda_i)_{i\in I}\) is the family of weights of the \(\bbC^*\)-representation \(H^0(\varietyTC,\lineBundleTC^k\otimes \mathcal{O}(-\fiberTC_{0}))\).   

This allows us to express the quantities involved in the definitions of the Donaldson--Futaki invariant and the non-Archimedean \(J\)-functional as follows. 
The sum \(w_k\) of weights of \(H^0(\fiberTC_{0}, \lineBundleTC^k|_{\fiberTC_{0}})\) is given by 
\[ w_k = \dim H^0(\varietyTC,\lineBundleTC^k) - \dim H^0(\fiberTC_{0}, \lineBundleTC^k|_{\fiberTC_{0}}). \]
Furthermore, if \(\lambda_k\) denotes the maximum of all weights of \(H^0(\fiberTC_{0}, \lineBundleTC^k|_{\fiberTC_{0}})\), then 
\[\sup_k \frac{\lambda_k}{k} = \max_{\polytope} g. \] 

In order to use the expansion~\eqref{Pukhlikov-Khovanskii} applied to \(\polytopeTC\), we may remark that for any (say continuous) function \(f\) on \(\polytope\),
\begin{equation*} 
\int_{\polytopeTC} f(x) \lebesgue(x,t) = \int_{\polytope}  f(x) g(x) \lebesgue(p) 
\end{equation*}
and 
\begin{equation*}
\int_{\partial \polytopeTC} f(x) \lebesgueBoundary(x,t) = 2 \int_{\polytope}  f(x) \lebesgue(x) + \int_{\partial \polytope} g(x) f(x) \lebesgueBoundary(x).  
\end{equation*}
The latter equality follows from the decomposition of the boundary as the slice \(\polytopeTC\cap(\realSphericalLattice\times\{0\})\), the graph \(\{(x,g(x))\mid p\in \polytope\}\) of \(g\) (giving each one half of the first summand), and the vertical part \(\{(x,t)\mid x\in \partial\polytope,\) \mbox{\(0\leq t\leq g(x)\}\)} giving the second summand.
For the graph of \(g\) to give the right contribution, given the definition of \(\lebesgueBoundary\), it is actually \emph{necessary to assume} that \(g\) is defined by \emph{integral} linear forms. We can restrict to this case by base change since we are interested in the non-Archimedean Mabuchi functional rather than the Donaldson--Futaki invariant. 

By the expansion~\eqref{Pukhlikov-Khovanskii} applied to both \(\polytope\) and \(\polytopeTC\), we obtain the following expansions: 
\begin{equation*}
%\label{kdk_expansion}
k d_k = k^{n+1} \int_{\polytope} \DHpol \lebesgue + k^n \left( \frac{1}{2} \int_{\partial \polytope} \DHpol \lebesgueBoundary + \int_{\polytope} \coDHpol \lebesgue \right) + o(k^n),
\end{equation*}
\begin{equation*}
%\label{wk_expansion}
w_k = k^{n+1} \int_{\polytope} g \DHpol \lebesgue + k^n \left( \frac{1}{2} \int_{\partial \polytope} g \DHpol \lebesgueBoundary + \int_{\polytope} g \coDHpol \lebesgue \right) + o(k^n).
\end{equation*}
Writing \(w_k = A k^{n+1} + B k^n + o(k^n)\) and \(k d_k = C k^{n+1} + D k^n + o(k^n)\), we have 
\begin{equation*}
\frac{w_k}{k d_k} = \frac{A}{C} + \frac{1}{C}\left(B - \frac{A D}{C}\right) \frac{1}{k} + o\left(\frac{1}{k}\right).  
\end{equation*} 
Substituting the expressions above proves Theorem~\ref{thm_NA}. 
\end{proof}

Combining the results of Sections~\ref{sec_test_config} and~\ref{sec_NA} proves Theorem~\ref{thm_Kstab}.
%\qed

\section{Restating the problem}
\label{sec_new}

In this section, we will show how Theorem~\ref{thm_sufficient} applies to the uniform K-stability problem. 
For this, we will obtain a new expression of \(\LFunctional\) when applied to smooth functions and show how to derive uniform K-stability in these terms. 
To simplify the notation, we assume (by choosing the global section appropriately) that the origin \(0\in \realSphericalLattice\) is in the interior of the polytope \(\polytope\). 

\subsection{A new expression of \(\boldsymbol{\LFunctional}\) on smooth functions}

Let \(E_1, \ldots, E_k\) denote the facets of \(\polytope\), and let \(T_i\) denote the pyramid with vertex the origin and base \(E_i\).
This provides in particular a decomposition \(\polytope = \bigcup_{i} T_i\). 
The author learned the idea of using such a decomposition in \cite{ZZ08,LZZ18}.

We will need notation for the set of equations defining \(\polytope\): for each facet, let \(u_i\) denote the outward-pointing primitive normal in \(\dualLattice\), 
and let \(n_i\) be the positive number such that 
\[ \polytope = \{ x \in \realSphericalLattice \mid \forall i, u_i(x) \leq n_i \}. \]

Let \(J\) and \(K\) be the functions on \(\polytope\) defined (almost-everywhere) by 
\begin{equation}
\label{eqn_J}
J(x) = \frac{- \DHpol(x)}{n_i} 
\end{equation} 
and 
\begin{equation}
\label{eqn_K}
K(x) = 2 \scalar \DHpol(x) - 2 \coDHpol(x) - \frac{1}{n_i} d_x\DHpol(x) - \frac{1}{n_i} r \DHpol(x) 
\end{equation} 
for \(x \in \Int(T_i)\), where \(r\) denotes the dimension of \(\realSphericalLattice\), also called the \emph{rank} of \(\variety\). 

Note that these functions are not continuous in general but piecewise polynomial with respect to the decomposition of the polytope, hence integrable. Furthermore, \(J\) is negative on the interior of \(\polytope\). 

\begin{prop}
\label{prop_smooth}
For any continuous function \(f\) on \(\polytope\), smooth on the interior, we have 
\[ \LFunctional(f) = \int_{\polytope} (f(x) K(x) + d_xf(x) J(x)) \lebesgue(x). \] 
\end{prop}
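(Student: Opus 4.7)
\textbf{Proof plan for Proposition~\ref{prop_smooth}.}

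The plan is to rewrite the boundary term of $\LFunctional(f)$ as a sum of volume integrals over the pyramids $T_i$, using an integration-by-parts identity tailored to pyramids with apex at the origin. Comparing with the definitions of $J$ and $K$ will then yield the formula.

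The volume part is already in good shape: the term $2\int_\polytope f(aP-Q)\lebesgue$ appearing in $\LFunctional(f)$ corresponds exactly to the contribution of $2aP-2Q$ in $K$. It remains to show
\begin{equation*}
\int_{\partial\polytope} fP \lebesgueBoundary \;=\; \sum_i \frac{1}{n_i}\int_{T_i}\bigl(f\,d_x P(x) + r\,f(x) P(x) + d_xf(x)\cdot P(x)\bigr)\lebesgue(x),
\end{equation*}
since the right-hand side, after substituting the definitions \eqref{eqn_J} and \eqref{eqn_K} and rearranging, accounts precisely for $-\int_{\partial\polytope}fP\lebesgueBoundary$ plus the integrand $d_xf\cdot J$ in the claim.

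To prove this facet-by-facet, I would work on a single pyramid $T_i$ with base the facet $E_i$ and apex at the origin. The defining equation $u_i(x)=n_i$ on $E_i$ together with the fact that $u_i$ is primitive in $\dualLattice$ pins down the compatibility between $\lebesgue$ and $\lebesgueBoundary|_{E_i}$: choosing a basis $e_1,\dots,e_{r-1}$ of $\ker u_i\cap\sphericalLattice$ and $e_r\in\sphericalLattice$ with $u_i(e_r)=1$, the measure $\lebesgue$ becomes the standard Lebesgue measure $dx_1\cdots dx_r$ and $\lebesgueBoundary|_{E_i}$ becomes $dx_1\cdots dx_{r-1}$. Parametrize $T_i$ by $E_i\times[0,1]\to T_i$, $(y,s)\mapsto sy$; a direct Jacobian computation gives
\begin{equation*}
\int_{T_i} h(x)\lebesgue(x)=\int_{E_i}\int_0^1 n_i\, s^{r-1}\, h(sy)\,ds\,\lebesgueBoundary(y).
\end{equation*}

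The essential point is the Euler-type identity $\frac{d}{ds}\bigl[s^r h(sy)\bigr]=s^{r-1}\bigl(r\,h(sy)+d_{sy}h(sy)\bigr)$, valid for any smooth $h$. Applied to $h=fP$, it turns the inner integral into a total derivative:
\begin{equation*}
\int_{T_i}\bigl(r\,h+d_x h\bigr)\lebesgue=\int_{E_i} n_i\int_0^1 \frac{d}{ds}[s^r h(sy)]\,ds\,\lebesgueBoundary(y)=n_i\int_{E_i} h\,\lebesgueBoundary,
\end{equation*}
using $r\geq 1$ and boundedness of $h$ near the origin to kill the boundary term at $s=0$. Expanding $d_x(fP)=f\,d_xP+P\,d_xf$ by Leibniz and dividing by $n_i$ yields the desired facet identity; summing over $i$ and reinserting into $\LFunctional(f)$ concludes the proof.

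The only delicate step is verifying the normalization of $\lebesgueBoundary$ against $\lebesgue$ so that the divergence-theorem identity holds with the integer $n_i$ (and not, say, with $n_i/|u_i|$ times some lattice correction). This is where primitivity of $u_i$ in $\dualLattice$ is used, through the choice of complementary basis vector $e_r$. Everything else is routine calculus applied to the radial decomposition of the pyramid.
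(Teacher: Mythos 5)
Your proof is correct and follows essentially the same route as the paper: both decompose $\polytope$ into the pyramids $T_i$ and reduce everything to the per-pyramid identity $\int_{T_i}(r\,h + d_xh(x))\lebesgue = n_i\int_{E_i}h\lebesgueBoundary$ for $h=fP$. The only difference is that the paper obtains this identity from the divergence theorem together with a computation of the constant relating $u_i$, the Euclidean unit normal and the area measure, whereas you get it by an explicit radial change of variables and the Euler identity, which is a slightly more self-contained way of handling the same normalization issue.
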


\begin{proof}
We identify \(\realSphericalLattice\) with the Euclidean space \(\bbR^r\) by choosing a basis of \(\sphericalLattice\).  
Let \(\nu\) denote the unit outward-pointing normal vector to \(\partial\polytope\), and let \(\mathop{d\sigma_e}\) denote the area measure on \(\partial\polytope\). Also, let \((x \cdot \nu)\) denote the scalar product of \(x \in \realSphericalLattice\) with \(\nu\) induced by the identification with \(\bbR^r\). 

For \(f\) smooth, the divergence theorem yields, for all \(i\),
\begin{equation*}
\int_{E_i} f(x) \DHpol(x)  (x \cdot \nu) \mathop{d \sigma_e}(x) = 
\int_{T_i} \left( f(x) d_x\DHpol(x) + d_xf(x) \DHpol(x) + r f(x) \DHpol(x) \right) \lebesgue(x).  
\end{equation*}
Note that the considered vector field  is radial; hence there are no contributions from the other facets of \(T_i\). 

Let \(c_i\) denote the constant such that \(u_i(x)=c_i (x \cdot \nu|_{E_i})\) for \(x\in \realSphericalLattice\). 
Then \(\mathop{d\sigma_e} = c_i\lebesgueBoundary\) on \(E_i\) and \((x \cdot \nu) = \frac{n_i}{c_i}\) for \(x\in E_i\); hence on the left-hand side above, we may replace \((x\cdot\nu) \mathop{d \sigma_e}\) with \(n_i \lebesgueBoundary\).

Then using the decomposition \(\polytope = \bigcup_{i} T_i\) to rewrite the boundary term in \(\LFunctional\), we have 
\begin{equation*}
\LFunctional(f) = \sum_i \int_{T_i} (K(x) f(x) + J(x) d_xf(x)) \lebesgue(x)
%\LFunctional(f) = \sum_i \int_{T_i} \left( f(p) \left( 2 \scalar \DHpol(p) - 2 \coDHpol(p) - \frac{1}{n_i} d_p\DHpol(p) - \frac{1}{n_i} r \DHpol(p) \right) - \frac{1}{n_i} d_pf(p) \DHpol(p) \right) \lebesgue
\end{equation*}
by the definitions of \(K\) and \(J\).
\end{proof}

\begin{rem}
\label{rem_expression_JK}
In the case when the restriction of \(\DHpol\) to the facet \(E_i\) vanishes, we can replace the value of \(n_i\) in the expressions of \(J|_{T_i}\) and \(K|_{T_i}\) with any number or even with \(+\infty\), in the sense that one can take \(J=0\) and \(K=2\scalar \DHpol - 2\coDHpol\) on \(T_i\). 
\end{rem}

\subsection{Working on smooth functions}

As in the introduction, we choose a complement \(\complementLinValCone\) of \(\linValCone\) in \(\realDualLattice\), and we denote by \(\smoothNormalized\) the space of continuous concave functions on \(\polytope\), smooth in the interior with differentials in \(\valCone\), such that \(\max f = 0\) and \(d_0f \in \complementLinValCone\). 

To replace the boundary integral used in the toric case by Donaldson, we introduce
\[ \LFunctional_+(f) := \int_{\polytope} (K_+(x)f(x)+J(x)d_xf(x))\lebesgue(x), \]
where, for \(x\in \Int(T_i)\), 
\[ K_+(x) = \sup(2\scalar P(x) - 2Q(x), 0) - \frac{1}{n_i}d_xP(x)-\frac{1}{n_i}rP(x). \]
                                          
\begin{lem}
\label{lem_positive_contrib}
There exists a constant \(\eta>0\) such that for all \(f\in \smoothNormalized\), 
\[ \LFunctional_+(f) \geq \eta \int_{\polytope} (-f)P\lebesgue \]
\end{lem}

\begin{proof}	
  First note that, as in the proof of Proposition~\ref{prop_smooth}, 
\[ \LFunctional_+(f) = \int_{\polytope} 2f\sup(2\scalar P(x) - 2Q(x), 0)\lebesgue - \int_{\partial\polytope} f P d\sigma. \]

We first claim that if \(x\in \polytope\) and \(P(x)=0\), then there exists a neighborhood \(V\) of \(x\) such that we have \((2\scalar P - 2Q)(y) <0\) for \(y\in V\cap \polytope\). 
Indeed, let \(\mathcal{S}\) be the subset of all \(\alpha\in \relevantRoots\) such that \(\langle \alpha, x + \chi \rangle=0\). 
Then the dominant term in \(\scalar P-Q\), for \(y\) near \(x\), is 
\[ - \sum_{\alpha \in \mathcal{S}} \frac{\langle \alpha, \varpi \rangle}{\langle \alpha, y + \chi \rangle} P(y), \]
which is negative for \(y\in \polytope\) near \(x\). 

Let us work in polar coordinates. 
For a given direction \(\theta \in \mathbb{S}^{r-1}\), consider the ray \(t\theta\), and let \(s\) denote the maximal positive real number such that \(s\theta\in \polytope\). 
If \(P(s\theta)=0\), then the concavity and normalization, plus the previous observation, show that for a neighborhood \(V'\) of \(\theta\) in \(\mathbb{S}^{r-1}\) and \(\polytope'=\{t\theta \in \polytope \mid \theta \in  V'\}\), there exists a \(\eta'>0\) such that for all \(f\in \smoothNormalized\), 
\[ \int_{\polytope'} 2f\sup(2\scalar P(x) - 2Q(x), 0)\lebesgue \geq \eta' \int_{\polytope'} (-f)P\lebesgue. \]
If \(P(s\theta)>0\), then there is a compact neighborhood \(F\) of \(s\theta\) in \(\partial\polytope\) such that \(P>0\) on \(F\). Then setting \(\polytope'=\{ty \mid t\in [0,1], y \in F\}\), by convexity and normalization, there exists a \(\eta'>0\) such that for all \(f\in \smoothNormalized\), 
\[ \int_{F} -f P d\sigma \lebesgue \geq \eta' \int_{\polytope'} (-f)P\lebesgue. \]
By the compactness of \(\partial\polytope\), we obtain the result. 
\end{proof}

\begin{prop}
\label{prop_from_to}
The polarized \(\group\)-spherical variety \(\polVar\) is \(\group\)-uniformly K-stable if \(\LFunctional\) vanishes on elements of \(\linValCone\) and there exists an \(\varepsilon > 0\) such that for all \(f \in \smoothNormalized\), 
\begin{equation}
\label{eqn_unif_normalized}
\LFunctional(f) \geq \varepsilon \LFunctional_+(f). 
\end{equation}
It is K-semistable if \(\LFunctional\) is invariant under addition of an element of\, \(\linValCone\) and \(\LFunctional \geq 0\) on \(\smoothNormalized\). 
\end{prop}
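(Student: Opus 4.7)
The plan is to use Theorem~\ref{thm_Kstab}(2), which characterizes $\group$-uniform K-stability as the existence of $\varepsilon > 0$ such that $\LFunctional(g) \geq \varepsilon \inf_{l \in \linValCone \cap \rationalDualLattice} \JFunctional(g + l)$ for every rational piecewise linear concave $g : \polytope \to \bbR$ with slopes in $\valCone$. The task is to bridge the gap between the hypothesis, which is stated for smooth $f \in \smoothNormalized$, and this piecewise linear inequality. I would do this in two moves: a normalization move, passing from arbitrary smooth concave functions with gradient in $\valCone$ to elements of $\smoothNormalized$, and a mollification move, passing from piecewise linear functions to smooth ones.

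For the normalization, given any smooth concave $h$ on $\polytope$ with gradient in $\valCone$, I would let $l_h$ be the $\linValCone$-component of $d_0 h$ in the decomposition $\realDualLattice = \linValCone \oplus \complementLinValCone$, set $c_h = \max_\polytope (h - l_h)$, and define $f_h := h - l_h - c_h$. Then $f_h$ lies in $\smoothNormalized$ since its gradient stays in $\valCone$ ($\valCone$ is a convex cone containing $\linValCone$, so $\valCone + \linValCone = \valCone$). Using linearity of $\LFunctional$, its vanishing on constants (by definition of $\scalar$), and the assumption that it vanishes on $\linValCone$, one gets $\LFunctional(f_h) = \LFunctional(h)$, while a direct computation gives $\int_\polytope (-f_h)\DHpol\lebesgue = \JFunctional(h - l_h) \geq \inf_{l \in \linValCone} \JFunctional(h + l)$. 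Applying the hypothesis to $f_h$ therefore yields $\LFunctional(h) \geq \varepsilon \inf_{l \in \linValCone} \JFunctional(h + l)$ for every smooth concave $h$ on $\polytope$ with gradient in $\valCone$.

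For the approximation, I would write a rational piecewise linear concave $g$ as the infimum of finitely many affine forms whose linear parts lie in $\valCone$, extend by the same formula to $\realSphericalLattice$, and convolve with a smooth nonnegative mollifier of mass one and support of radius $\epsilon$. The resulting $g_\epsilon$ is smooth and concave on a neighborhood of $\polytope$, with gradient in $\valCone$ everywhere (as a convex combination of the $u_i$), converges uniformly to $g$, and $d_x g_\epsilon \to d_x g$ almost everywhere with uniform $L^\infty$ bound. Using Proposition~\ref{prop_smooth} to express $\LFunctional(g_\epsilon)$ as an integral involving $g_\epsilon$ and $d_x g_\epsilon$, dominated convergence gives $\LFunctional(g_\epsilon) \to \LFunctional(g)$, and similarly $\JFunctional(g_\epsilon + l) \to \JFunctional(g+l)$ locally uniformly in $l$. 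Applying the normalization step to $g_\epsilon$ and passing to the limit delivers $\LFunctional(g) \geq \varepsilon \inf_{l \in \linValCone} \JFunctional(g + l)$; density of $\rationalDualLattice$ in $\realDualLattice$ and continuity of $\JFunctional$ in $l$ give the same inequality with the infimum restricted to rational $l$. The K-semistability case is identical with $\varepsilon = 0$, using that invariance of $\LFunctional$ under addition of elements of $\linValCone$ is equivalent, by linearity, to its vanishing on $\linValCone$.

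The main delicate point is to justify passing to the limit in the infimum over $l$. This rests on the properness of $l \mapsto \JFunctional(g + l)$ on $\linValCone$: since $\DHpol$ is strictly positive on the interior of $\polytope$ and every nonzero element of $\linValCone$ restricts to a non-constant linear function on $\polytope$, one has $\JFunctional(g + t l) \to \infty$ as $t \to \infty$, uniformly for $l$ on the unit sphere of $\linValCone$ by compactness. Hence the infimum is attained on a bounded set of $l$, on which the uniform convergence $g_\epsilon \to g$ transfers to convergence of the infima, closing the limiting argument.
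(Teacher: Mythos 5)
Your proof is correct and follows essentially the same route as the paper's: approximate the piecewise linear $g$ by smooth concave functions with slopes in $\valCone$, normalize into $\smoothNormalized$ by subtracting $p_{\linValCone}(d_0 f)$ and the maximum, use the vanishing of $\LFunctional$ on constants and on $\linValCone$ to identify $\LFunctional$ of the normalized function with $\LFunctional(g)$, and bound $\int_{\polytope}(-f)\DHpol\lebesgue$ below by $\inf_{l}\JFunctional(g+l)$. The only differences are organizational (you normalize a general smooth function first and then pass to the limit in the resulting inequality, whereas the paper normalizes each approximant and extracts a uniformly convergent subsequence) and that you supply the mollification details the paper leaves implicit; also note that since $\LFunctional$ as defined in~\eqref{eqn_LFunctional} involves only $g$ and not its derivative, uniform convergence alone already gives $\LFunctional(g_\epsilon)\to\LFunctional(g)$ and the uniform-in-$l$ convergence of $\JFunctional(g_\epsilon+l)$, so the detour through Proposition~\ref{prop_smooth} and the properness discussion, while correct, are not needed.
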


\begin{proof}
Assume that \(\LFunctional\) is invariant under addition of an element of \(\linValCone\) and there exists an \(\varepsilon > 0\) such that \eqref{eqn_unif_normalized} holds for all \(f \in \smoothNormalized\). 

Let \(\polVarTC\) be a \(\group\)-equivariant test configuration for \(\polVar\), and let \(g\) denote the associated positive concave rational piecewise linear function, with slopes in \(\valCone\). 
The first step is to note that \(g\) can be uniformly approximated on \(\polytope\) by a sequence \((f_m)\) of smooth concave functions with slopes in \(\valCone\). 

For each \(m\), set 
\[ \hat{f}_m := f_m - p_{\linValCone}(d_0f_m) - \max_{\polytope} (f_m - p_{\linValCone}(d_0f_m)), \] 
where \(p_{\linValCone}\) is the linear projection on \(\linValCone\) relative to \(\complementLinValCone\). 
Then \(\hat{f}_m\in \smoothNormalized\), and since the \(f_m\) are uniformly Lipschitz, it subconverges uniformly to a function \(g-l -\max_{\polytope}(g - l)\) for some \(l\in \linValCone\).  

By assumption~\eqref{eqn_unif_normalized}, Lemma~\ref{lem_positive_contrib}, and uniform convergence, we have 
\[ \LFunctional\left(g-l -\max_{\polytope}(g - l)\right) \geq \varepsilon \int_{\polytope} \left(\max_{\polytope}(g-l) - g + l\right) \DHpol \lebesgue. \]
By the invariance of \(\LFunctional\) under addition of a constant, or an element of \(\linValCone\), we can replace the left-hand side with \(\LFunctional(g)\). 
We finally have 
\[ \LFunctional(g) \geq \varepsilon \int_{\polytope} \left(\max_{\polytope}(g-l) - g + l\right) \DHpol \lebesgue \geq  \varepsilon \inf_{l' \in \linValCone} \JFunctional(g + l'). \]
We have proved uniform K-stability by Theorem~\ref{thm_Kstab}. 

For K-semistability, it suffices to follow the same arguments with \(\varepsilon = 0\).  
\end{proof} 

\section{A combinatorial sufficient condition}
\label{sec_proof_sufficient}

In this section, we will prove Theorem~\ref{thm_sufficient}. 
The proof is rather elementary and follows from a well-chosen decomposition of \(\LFunctional\) as a sum of terms which are each non-negative under the assumptions. 
We will begin with a simpler analogue of Theorem~\ref{thm_sufficient} that uses this decomposition and then proceed to the proof.

\subsection{A condition for semistability}
\label{sec_sufficient_ss}

For now, let \(b\) be any point in \(\polytope\).
Write the integrand of the functional \(\LFunctional_s\) as 
\begin{align}
f(x)K(x) + d_xf(x)J(x) = & \left(d_xf(x-b) - f(x) + f(b)\right)J(x) \label{eqn_concave1} \\
& \quad + \left(f(x) - f(b) - d_bf(x-b)\right) (K + J)(x) \label{eqn_concave2} \\ 
& \quad + d_xf(b) J(x) \label{eqn_barycenter_condition} \\
& \quad + f(b) K(x) \label{eqn_constant} \\
& \quad + d_bf(x-b) (K + J)(x). \label{eqn_barycenter_definition} 
\end{align}
We thus have a decomposition of \(\LFunctional_s\) as a sum of each corresponding integral.  

\begin{prop}
\label{thm_sufficient_ss}
Assume that \(K + J \leq 0\).
Let \(b\) be the element of \(\polytope\) defined by 
\begin{equation*}
\int_{\polytope} (x-b) (K(x)+J(x)) \lebesgue(x) = 0.
\end{equation*}
If\, \(b\) is in \(-\valCone^{\vee}\), then 
\[ \LFunctional_s(f) \geq 0 \]
for any continuous concave function \(f\) on \(\polytope\), smooth in the interior, with differentials in \(\valCone\). 
\end{prop}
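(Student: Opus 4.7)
The plan is to integrate the pointwise decomposition \eqref{eqn_concave1}--\eqref{eqn_barycenter_definition} already exhibited just before the statement, and to show that each of the five resulting integrals is non-negative. The decomposition itself is a routine algebraic rearrangement obtained by adding and subtracting $f(b)$, $d_bf(x-b)$, and $d_xf(b)$, so I would verify it once and take it for granted. A preliminary observation is that $b$ is well-defined and lies in $\polytope$: since $\int_{\polytope}(K+J)\lebesgue = \int_{\polytope} J\lebesgue < 0$ and $K+J\leq 0$, the defining equation rewrites as $b = \int_{\polytope} x\,(K+J)\lebesgue / \int_{\polytope}(K+J)\lebesgue$, exhibiting $b$ as a convex combination of points of $\polytope$ with respect to the probability measure proportional to $-(K+J)\lebesgue$.

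The two ``balanced'' terms vanish by construction. The integral of \eqref{eqn_constant} equals $f(b)\int_{\polytope} K\lebesgue = 0$ because $K$ has zero integral, and the integral of \eqref{eqn_barycenter_definition} equals $d_bf\bigl(\int_{\polytope}(x-b)(K+J)\lebesgue\bigr) = 0$ by the defining equation of $b$. The remaining three summands have pointwise non-negative integrands on $\Int(\polytope)$, which is a full-measure subset of $\polytope$. For \eqref{eqn_barycenter_condition}, the hypothesis $d_xf\in\valCone$ together with $b\in -\valCone^{\vee}$ gives $d_xf(b)\leq 0$, which combines with $J\leq 0$. For \eqref{eqn_concave1}, the supporting-hyperplane inequality at $x$ gives $f(b)\leq f(x)+d_xf(b-x)$, that is $d_xf(x-b)-f(x)+f(b)\leq 0$; this factor is multiplied by $J\leq 0$. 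For \eqref{eqn_concave2}, the symmetric concavity inequality $f(x)-f(b)-d_bf(x-b)\leq 0$ at $b$ is multiplied by $K+J\leq 0$. Summing the five contributions yields $\LFunctional_s(f)\geq 0$.

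Conceptually, the entire content of the proof is packed into the five-term decomposition: its two ``global'' pieces are annihilated by the normalization $\int_{\polytope} K\lebesgue = 0$ and by the barycenter equation, while the three ``local'' pieces each split into a concavity factor and a sign factor that match perfectly thanks to $K+J\leq 0$, $J\leq 0$, and $b\in-\valCone^{\vee}$. I do not anticipate a real obstacle; the only mild technical point is that $f$ is smooth only on $\Int(\polytope)$, so the pointwise inequalities are to be read almost everywhere, but this is plainly sufficient for the integral statement.
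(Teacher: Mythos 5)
Your proposal is correct and follows exactly the paper's own argument: the same five-term decomposition, with the terms \eqref{eqn_constant} and \eqref{eqn_barycenter_definition} vanishing by the normalization of \(K\) and the definition of \(b\), and the terms \eqref{eqn_concave1}, \eqref{eqn_concave2}, \eqref{eqn_barycenter_condition} non-negative by concavity and the sign hypotheses on \(J\), \(K+J\) and \(b\). The only difference is that you spell out the supporting-hyperplane inequalities and the well-definedness of \(b\), which the paper leaves implicit.
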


\begin{proof}
Using \(b\) as defined in the statement, we consider the decomposition of \(\LFunctional_s\) as above. Then 
\begin{itemize}
\item the contribution from summands~\eqref{eqn_concave1}~and~\eqref{eqn_concave2} is non-negative by the concavity and non-positivity of \(J\) and \(K+J\). 
\item the contribution~\eqref{eqn_barycenter_condition} is non-negative by the assumption on the barycenter \(b\) and the fact that the differentials of \(f\) are in \(\valCone\), 
\item the contribution of the summand~\eqref{eqn_constant} is zero since the integral of \(K\) is zero,  
\item and the contribution~\eqref{eqn_barycenter_definition} is zero by the definition of \(b\). \hfill\qed
\end{itemize}
\renewcommand{\qed}{}
\end{proof}

\subsection{A preparatory pre-compactness result}

For the full proof of Theorem~\ref{thm_sufficient} and for future reference, we will use the following pre-compactness result, which is a generalization of one used by Donaldson \cite[Corollary~5.2.5]{Don02}. 

\begin{prop}
\label{prop_compactness}
Let \(C\) be a positive real number. 
Any sequence of non-positive concave functions \((f_m)\) on \(\Delta\) with \(\int_{\Delta}(-f) P \lebesgue \leq C\) has a sub-sequence which converges to a concave function \(f_{\infty}\) on the interior of \(\Delta\), and the convergence is uniform over strict compact subsets of \(\Delta\). 
\end{prop}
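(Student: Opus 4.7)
The plan is to run the standard compactness argument for sequences of concave functions with $L^1$ control, modified to accommodate the weight $P$. The main point is that the hypothesis $\int_\polytope (-f_m) P \lebesgue \leq C$ combined with concavity forces uniform two-sided pointwise bounds on compact subsets of the interior, from which equi-Lipschitz control and Arzelà-Ascoli then yield the claim.

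First I would fix an exhaustion of $\Int(\polytope)$ by compact convex sets $K_1 \subset K_2 \subset \cdots$ with $K_j \subset \Int(K_{j+1})$ and $\bigcup_j K_j = \Int(\polytope)$. Since $P$ is continuous and positive on the interior, there exists $\delta_j > 0$ with $P \geq \delta_j$ on $K_{j+1}$, hence $\int_{K_{j+1}} (-f_m) \lebesgue \leq C/\delta_j$ for every $m$. Now apply Jensen's inequality to the convex function $-f_m$: for each $x \in K_j$ one can find a small radius $r_j > 0$, independent of $x \in K_j$, with $B(x,r_j) \subset K_{j+1}$, and then
\begin{equation*}
-f_m(x) \leq \frac{1}{|B(x,r_j)|} \int_{B(x,r_j)} (-f_m) \lebesgue \leq \frac{C}{\delta_j |B(x,r_j)|}.
\end{equation*}
Together with the non-positivity hypothesis $f_m \leq 0$, this gives a uniform (in $m$ and $x \in K_j$) sup norm bound $\|f_m\|_{L^\infty(K_j)} \leq M_j$.

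Next I would invoke the classical fact that a family of concave functions which is uniformly bounded on a convex set $K_{j+1}$ is automatically uniformly Lipschitz on the smaller set $K_j$, with Lipschitz constant depending only on $M_{j+1}$ and $\operatorname{dist}(K_j, \partial K_{j+1})$. This is proved by applying concavity along a line segment extended from $x \in K_j$ past $y \in K_j$ until it hits $\partial K_{j+1}$ and comparing slopes. Arzelà-Ascoli then extracts a subsequence of $(f_m)$ converging uniformly on $K_1$; a standard diagonal extraction across the exhaustion produces a subsequence converging uniformly on every $K_j$, hence uniformly on every compact subset of $\Int(\polytope)$. The pointwise limit $f_\infty$ is concave since concavity is preserved under pointwise convergence.

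The only minor subtlety is making sure the weight $P$ does not spoil the uniform $L^1$ control as one approaches the boundary, but this is handled automatically by working entirely inside the interior and using only the lower bound $P \geq \delta_j$ on each $K_{j+1}$; no growth control on $P$ near $\boundary$ is needed, since we do not claim anything about the behavior of $f_\infty$ at the boundary. I do not anticipate a serious obstacle: every step is an elementary application of convexity theory, and the result is genuinely a direct generalization of Donaldson's Corollary~5.2.5 with the trivial weight replaced by $P$.
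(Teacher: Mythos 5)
Your proof is correct and follows essentially the same route as the paper's: both derive uniform sup and Lipschitz bounds on compact subsets of \(\Int(\polytope)\) from the integral bound, the positivity of \(P\) on such subsets, and concavity, and then conclude with Arzelà--Ascoli and a diagonal extraction. The only cosmetic difference is that you obtain the pointwise bound via Jensen's inequality on small balls and then invoke the classical bounded-implies-locally-Lipschitz fact for concave functions, whereas the paper packages the same slope-comparison estimate as an explicit quantitative bound on superdifferentials in terms of \(\int_{\polytope}(-f)P\lebesgue\).
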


For the proof, we use the standard Euclidean structure on \(\bbR^n\).   
For any \(x\in\Delta\), let \(d_x\) denote the distance from \(x\) to the boundary \(\partial\Delta\).  
For any positive number \(d\), set 
\[ \Delta_d := \{ x \in \Delta \mid d_x \geq d \}. \] 
Note that by the continuity of \(P\), for any positive \(d\), there exists a positive constant \(\delta_d\) such that \(P \geq \delta_d\) on~\(\Delta_d\). 
Recall that if \(f\) is a concave function on \(\polytope\), a linear function \(l\) is a \emph{superdifferential} of \(f\) at \(x\) if for all \(y\), \(f(y) \leq f(x) + l(y-x)\). 

\begin{lem}\label{lem73}
For any $($small enough$)$ positive \(d\), there exists a positive constant \(\kappa = \kappa(d)\) such that for any point \(x \in \Delta_d\), for any non-positive concave function \(f\) on \(\Delta\) with finite \(\int_{\Delta} f P \lebesgue\), and for any superdifferential \(l\) of \(f\) at~\(x\), 
\[ \lVert\, l\, \rVert \leq \kappa \int_{\Delta} (-f) P \lebesgue. \]
\end{lem}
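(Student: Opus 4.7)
The plan is to exploit concavity of $f$ together with $f\le 0$ to force $-f$ to grow in the direction opposite to $l$, producing a region of controlled volume on which $-f$ is bounded below by a multiple of $\lVert l\rVert$, and then integrate against $P$. The hypothesis $x\in\Delta_d$ ensures a full ball of radius $d$ around $x$ lies inside $\Delta$, giving room to work in any direction chosen by $l$.

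Identifying $l$ with a vector of $\bbR^n$ via the Euclidean structure and setting $v:=-l/\lVert l\rVert$, the superdifferential inequality $f(y)\le f(x)+l(y-x)$ combined with $f(x)\le 0$ produces
\[
-f(y)\;\ge\;\lVert l\rVert\,\bigl(v\cdot(y-x)\bigr)\qquad\text{for all }y\in\Delta.
\]
I would then work with the spherical cap
\[
\Omega:=\bigl\{y\in\bbR^n:\lvert y-x\rvert\le d/2\text{ and }v\cdot(y-x)\ge d/4\bigr\},
\]
which is contained in $B(x,d/2)\subset\Delta_{d/2}$ (every point of $B(x,d/2)$ sits at distance at least $d/2$ from $\partial\Delta$, since $x\in\Delta_d$). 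The Euclidean volume $c_0(d)$ of $\Omega$ depends only on $d$ and $n$, not on the direction $v$. On $\Omega$ the inequality above yields $-f\ge(d/4)\lVert l\rVert$, and by continuity and strict positivity of $P$ on the interior of $\Delta$ there is $\delta_{d/2}>0$ with $P\ge\delta_{d/2}$ on $\Delta_{d/2}$. Integrating over $\Omega\subset\Delta$ gives
\[
\int_{\Delta}(-f)P\lebesgue\;\ge\;\frac{d}{4}\,\lVert l\rVert\,\delta_{d/2}\,c_0(d),
\]
which produces the desired bound with $\kappa(d):=4/\bigl(d\,\delta_{d/2}\,c_0(d)\bigr)$.

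The argument is essentially mechanical once $\Omega$ is chosen; the only real task is to select a region that simultaneously, and uniformly in the unknown direction $v$, satisfies four conditions: (i) $\Omega\subset\Delta$, (ii) $\mathrm{vol}(\Omega)$ bounded below independently of $v$, (iii) $v\cdot(y-x)$ comparable to $d$ on $\Omega$, and (iv) $P$ bounded below on $\Omega$. The rotational symmetry of the Euclidean ball delivers (ii) and (iii) uniformly in $v$, while the hypothesis $x\in\Delta_d$ provides enough room for a cap of size $d/2$ to satisfy (i) and (iv) with constants depending only on $d$ and the fixed data $P$, $n$.
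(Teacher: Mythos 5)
Your proof is correct and follows essentially the same route as the paper: both restrict to the ball $B(x,d/2)\subset\Delta_{d/2}$ guaranteed by $x\in\Delta_d$, combine the superdifferential inequality with $f\le 0$ to make $-f$ dominate $-l(y-x)$ there, and use the uniform lower bound $\delta_{d/2}$ on $P$. The only cosmetic difference is that you take a pointwise bound on a spherical cap where the paper averages $l(y-x)\,P$ over the half-ball where $l(y-x)\le 0$; the resulting constants are of the same nature.
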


\begin{proof}
Consider the ball \(B\) of center \(x\) and radius \(d/2\), which is contained in \(\Delta_{d/2}\), and the half-ball \(B^-\subset B\) where the affine function \(y\mapsto l(y-x)\) is negative. 
Then 
\[ \int_{B^-} l(y-x) P(y) \lebesgue(y) \leq - \lVert\, l\, \rVert \delta_{d/2} \left(\frac{d}{2}\right)^{r+1} C, \]  
where \(C\) is a positive constant independent of \(x\), \(d\), \(f\). 
We have furthermore 
\begin{align*}  
\int_{B^-} l(y-x) P(y) \lebesgue(y) & \geq \int_{B^-} (l(y-x) + f(x)) P(y) \lebesgue(y) \quad \text{since }f\text{ is non-positive} \\
& \geq \int_{B^-} f(y) P(y) \lebesgue(y) \quad \text{by the definition of a superdifferential} \\
& \geq \int_{\Delta} f(y) P(y) \lebesgue(y) \quad \text{by the non-positivity again.}
\end{align*}
This concludes the proof.
\end{proof}

\begin{proof}[Proof of Proposition~\ref{prop_compactness}]
By concavity, we have 
\[ 0 \geq \max_{\Delta_d} f \geq \frac{1}{\int_{\Delta_d} P \lebesgue}\int_{\Delta_d} f P \lebesgue \geq \frac{1}{\int_{\Delta_d} P \lebesgue}\int_{\Delta} f P \lebesgue, \]
and we deduce from Lemma~\ref{lem73}  
\[ \min_{\Delta_d} f \geq \max_{\Delta_d} f + \mathrm{diam}(\Delta) \kappa(d) \int_{\Delta} f P \lebesgue. \]
As a consequence, a bound on \(\int_{\Delta} f P \lebesgue\) provides a uniform bound and a Lipschitz bound on any strict compact subset of \(\Delta\). 
The proposition then follows by applying the Arzel\`a--Ascoli theorem. 
\end{proof}

\subsection{End of the proof of Theorem~\ref{thm_sufficient}}\label{sec73}

By Proposition~\ref{prop_smooth}, we have \(\LFunctional_s=\LFunctional\) for continuous functions that are smooth on the interior of \(\polytope\). 
In particular, this is true for any linear function \(l\in \realDualLattice\). 
Furthermore, for \(l\in \realDualLattice\), we have 
\[\LFunctional(l)=\LFunctional_s(l)=\int_{\polytope} l (K+J) \lebesgue = -C_3 b\]
for some positive constant \(C_3\).

As a consequence, the condition that \(b\) is in the relative interior of \( -\valCone^{\vee}\) is equivalent to \(\LFunctional(l)\geq 0\) for all \(l\in\valCone\), with equality if and only if \(l\in \linValCone\). 
In particular, it implies that \(\LFunctional\) vanishes on \(\linValCone\). 

We prove the result by contradiction using Proposition~\ref{prop_from_to}. Let \((f_m)\) be a sequence of functions in \(\smoothNormalized\). Assume for the sake of a contradiction that \(\LFunctional(f_m) \to 0\) while \(\LFunctional_+(f_m)=1\) for all \(m\). 
The second condition implies, by Lemma~\ref{lem_positive_contrib} and Proposition~\ref{prop_compactness}, the existence of a limit \(f_{\infty}\) defined on the interior of the polytope, such that \((f_m)\) converges to \(f_{\infty}\) uniformly on every compact subset of the interior of \(\polytope\). 

Furthermore, by using the expression of \(\LFunctional(f_m)\) as a sum of non-negative terms as in Section~\ref{sec_sufficient_ss}, we obtain that each individual term converges to zero. 
In particular, that 
\[ \lim_{m\to\infty} f_{m}(x) - f_{m}(b) - d_bf_{m}(x-b) = 0\] 
almost-everywhere implies that \(f_{\infty}\) is affine. 
The slope of \(f_{\infty}\) is necessarily in \(\complementLinValCone \cap \valCone\) by the normalization. 
Finally, the assumption on the barycenter \(b\) and the term~\eqref{eqn_barycenter_condition} in Section~\ref{sec_sufficient_ss} imply that the slope must be zero. 
Hence \(f_{\infty}=0\). 
Back to our assumptions, we have \(\lim_{m\to\infty} \LFunctional(f_m)-\LFunctional_+(f_m) = -1\), but on the other hand, 
\(\LFunctional(f_m)-\LFunctional_+(f_m) = \int_{\polytope} \inf(2\scalar P(x)-Q(x),0) f_m(x) \lebesgue(x)\) converges to zero by the local uniform convergence of \(f_m\) to the zero function. 
We have reached a contradiction. 
\qed

\section{Full statement}
\label{sec_full}

Let us wrap up in this section the statement of the sufficient condition for uniform K-stability of polarized spherical varieties we proved. 

Let us note that \(\polytope\) is not the most direct choice of polytope associated to the polarized spherical variety \(\polVar\). 
The \emph{moment polytope} \(\polytope_+\) is in some sense more natural to consider as it depends on fewer choices. 
Recall that the moment polytope of \(\polVar\) is the convex polytope obtained by taking the closure of the set of all (normalized) \(\borel\)-weights of plurisections of \(\lineBundle\). 
% definition 
It does not lie in the same space as \(\polytope\) in general. 
More precisely, the relation between the two is a simple translation: \(\polytope_+ = \weightSection + \polytope\), and \(\polytope_+\) lies in the affine space \(\weightSection + \realSphericalLattice\). 
One easily sees from the previous sections that it is not important in our results for \(\weightSection\) to be in~\(\sphericalLattice\). 
Hence, the data of \(\polytope_+\) alone allows one to recover both \(\polytope\) and one choice (or several choices) of \(\weightSection\) to apply our sufficient criterion. 
On the other hand, the data of \(\sphericalLattice\) is not readily read off from \(\polytope_+\) alone. 
The importance of \(\sphericalLattice\) in the statement is seen through the integers \(n_i\). 

The full statement for our sufficient condition for \(\group\)-uniform K-stability is as follows.

\begin{thm}\label{thm81}
Let \(\polVar\) be a polarized \(\group\)-spherical variety with spherical lattice \(\sphericalLattice\) of rank \(r\), valuation cone \(\valCone\), and moment polytope \(\polytope_+\). Let \(\relevantRoots\) denote the positive roots of \(\group\) not orthogonal to \(\polytope_+\). 
Choose an element  \(\weightSection\) in the interior of \(\polytope_+\), number the facets of the translated polytope \(\polytope := -\weightSection+\polytope_+\) by \(E_i\) for \(1\leq i\leq s\), and let \(n_i\) be the positive numbers such that 
\[ \polytope = \{ x \in \realSphericalLattice \mid u_i(x) \leq n_i \}, \]
where \(u_i\in \dualLattice=\mathrm{Hom}(\sphericalLattice,\bbZ)\) denotes the outward-pointing primitive normal to \(E_i\). 
For \(m=r\) or \(r+1\), let \(L_m\) be the almost-everywhere defined function on \(\polytope\) such that for \(x\) in the interior of the convex hull of \(E_i\cup\{0\}\), 
\[ L_m(x) = \sum_{\alpha\in\relevantRoots} \left\langle \alpha, \left(m n_i^{-1} - 2 \scalar \right)(x+\weightSection) + \mathrm{Card}\left(\relevantRoots\right)\left( n_i^{-1} x + 2 \halfSum \right)  \right\rangle \prod_{\beta\in \relevantRoots\setminus \{\alpha\}} \langle \beta, x+\weightSection \rangle, \]
where the constant \(\scalar\) is defined by \(\int_{\polytope} L_r \lebesgue = 0\) for some Lebesgue measure \(\lebesgue\) on \(\realSphericalLattice\). 

Assume that \(L_{r+1}\) is non-negative on \(\polytope\), and let \(b\) denote the barycenter of \(\polytope\) with respect to the measure \(L_{r+1} \lebesgue\). 
Assume furthermore that \(-b\) is in the relative interior of the valuation cone \(\valCone\). 
Then \(\polVar\) is \(\group\)-uniformly K-stable.  

If\, \(L_{r+1}\) is non-negative on \(\polytope\) and \(b\in -\valCone^{\vee}\), then \(\polVar\) is \(\group\)-equivariantly K-semistable.
\end{thm}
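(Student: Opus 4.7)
The plan is to derive this theorem as a dictionary translation of Theorem~\ref{thm_sufficient} (respectively Proposition~\ref{thm_sufficient_ss}) into the combinatorial language of spherical varieties, then invoke Proposition~\ref{prop_from_to}. By construction, $\polytope$ contains the origin in its interior and $P$ is strictly positive there, placing us in the setting of Section~\ref{sec_new}.

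The core computation is the piecewise identity, valid on the interior of each pyramid $T_i$ of the decomposition $\polytope=\bigcup_iT_i$,
\[ -K(x) = \frac{L_r(x)}{\mathrm{Card}(\relevantRoots)\prod_{\alpha\in\relevantRoots}\langle\halfSum,\alpha\rangle}, \qquad -(K+J)(x) = \frac{L_{r+1}(x)}{\mathrm{Card}(\relevantRoots)\prod_{\alpha\in\relevantRoots}\langle\halfSum,\alpha\rangle}, \]
obtained by expanding Leibniz's rule on $P$, comparing with the expression~(\ref{eqn_coDHpol}) for $Q$, and using $J=-P/n_i$ together with~(\ref{eqn_K}). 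The scaling constant is positive and independent of $i$. Given these identities, the defining relation $\int_\polytope L_r\lebesgue=0$ is exactly the normalization $\int_\polytope K\lebesgue=0$ that makes $\LFunctional$ vanish on constants (Section~\ref{sec_NA}); the hypothesis $L_{r+1}\geq 0$ is $K+J\leq 0$; and the barycenter $b$ of $\polytope$ for the measure $L_{r+1}\lebesgue$ is precisely the point determined by $\int_\polytope(x-b)(K+J)\lebesgue=0$ in Theorem~\ref{thm_sufficient}. The remaining abstract hypotheses ($P>0$ on $\Int(\polytope)$; $J\leq 0$ with negative integral) are immediate, and the assumption on $-b$ in the statement matches the condition $b\in\mathrm{relint}(-\valCone^\vee)$ of Theorem~\ref{thm_sufficient} under the standard identification. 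Theorem~\ref{thm_sufficient} then produces $\varepsilon>0$ such that $\LFunctional_s(f)\geq\varepsilon\int_\polytope(-f)P\lebesgue$ for every $f\in\smoothNormalized$, and by Proposition~\ref{prop_smooth} the same inequality holds for $\LFunctional$.

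To apply Proposition~\ref{prop_from_to} it remains to check that $\LFunctional$ vanishes on $\linValCone$. For linear $l$ we have $d_xl(x)=l(x)$, hence
\[ \LFunctional(l) = \int_\polytope l\cdot(K+J)\lebesgue = -\,\frac{l(b)\int_\polytope L_{r+1}\lebesgue}{\mathrm{Card}(\relevantRoots)\prod_{\alpha\in\relevantRoots}\langle\halfSum,\alpha\rangle}, \]
which vanishes because any $l\in\linValCone$ is annihilated on $-\valCone^\vee\ni b$ (both $\pm l$ lying in $\valCone$ forces $l\equiv 0$ on the dual cone). Proposition~\ref{prop_from_to} then yields $\group$-uniform K-stability. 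The semistability assertion follows in exactly the same way, substituting Proposition~\ref{thm_sufficient_ss} for Theorem~\ref{thm_sufficient}, which requires only $b\in-\valCone^\vee$. I expect the main obstacle to be the algebraic bookkeeping behind the two displayed identities: correctly tracking the piecewise factors $n_i$, the signs carried by the outward primitive normals $u_i$, and the distribution of the differential of the product $P$. Once these identifications are verified, the rest of the argument is formal.
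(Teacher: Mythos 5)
Your proposal is correct and follows essentially the same route as the paper: identify $K=-C\,L_r$ and $K+J=-C\,L_{r+1}$ for a positive constant (which you make explicit as $\mathrm{Card}(\relevantRoots)\prod_{\alpha\in\relevantRoots}\langle\halfSum,\alpha\rangle$, where the paper only asserts positive constants $C_1$, $C_2$), match the normalization of $\scalar$, the sign hypothesis and the barycenter, check $\LFunctional(l)=-C'\,l(b)=0$ for $l\in\linValCone$, and conclude via Theorem~\ref{thm_sufficient} (resp.\ Proposition~\ref{thm_sufficient_ss}) together with Proposition~\ref{prop_from_to}. The displayed identities you flag as the main bookkeeping risk do check out against \eqref{eqn_DHpol}, \eqref{eqn_coDHpol}, \eqref{eqn_J} and \eqref{eqn_K}.
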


\begin{proof}
We work under the assumptions of the theorem; that is, \(L_{r+1}\) is non-negative on \(\polytope\), and the barycenter \(b\) of \(\polytope\) with respect to the measure \(L_{r+1} \lebesgue\) is in the relative interior of \(-\valCone\). 
Note that \(K = -C_1 L_r\) and \(K+J = -C_2 L_{r+1}\) for some positive constants \(C_1\) and \(C_2\). 
In particular, the barycenters \(b\) involved in Theorem~\ref{thm_sufficient} and the theorem we are trying to prove are indeed the same. 
Theorem~\ref{thm_sufficient} thus provides the uniform K-stability result, while the K-semistability result follows from Proposition~\ref{thm_sufficient_ss}.
\end{proof}

\begin{rem}
\label{rem_expression_L}
It follows from the relation between \(L_m\) and \(J\) and \(K\) in the proof and Remark~\ref{rem_expression_JK} that the statement of Theorem~\ref{thm81} applies as well if we replace \((n_i)^{-1}\) with any non-negative number for the facets \(E_i\) where \(\DHpol\) vanishes.  
\end{rem}

Applying Corollary~\ref{sph.YTD} from Odaka's appendix to this paper, we have the following sufficient condition for the existence of cscK metrics.

\begin{cor}
Assume that \(L_{r+1}\) is non-negative, that \(b\) is in the relative interior of \(-\valCone^{\vee}\) and that \(\variety\) is smooth. Then there exists a cscK metric in \(c_1(\lineBundle)\).
\end{cor}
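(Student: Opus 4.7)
The plan is to obtain this corollary as a direct chaining of two results already established in the paper: the combinatorial sufficient condition for $\group$-uniform K-stability proved immediately above, and the appendix's characterization of cscK existence in terms of $\group$-uniform K-stability for smooth polarized spherical varieties. There is really no new analytic or combinatorial work to do; the statement is a consequence of the preceding theorem combined with Corollary~\ref{sph.YTD}.

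More precisely, I would argue as follows. Under the three standing hypotheses—$L_{r+1} \geq 0$ on $\polytope$, the barycenter $b$ of $\polytope$ for the measure $L_{r+1}\lebesgue$ lying in the relative interior of $-\valCone^{\vee}$, and $\variety$ smooth—the first two hypotheses are exactly those required by the main theorem of this section. Applying that theorem yields that $\polVar$ is $\group$-uniformly K-stable. The smoothness hypothesis on $\variety$ is not used in that step; it is introduced precisely to bring us into the setting of the appendix.

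Next I would invoke Corollary~\ref{sph.YTD} from Odaka's appendix. For a polarized spherical \emph{manifold}, that corollary asserts that $\group$-uniform K-stability implies the existence of a constant scalar curvature Kähler metric representing $c_1(\lineBundle)$. Chaining the two implications, the combinatorial hypotheses on $L_{r+1}$ and $b$ together with smoothness of $\variety$ produce a cscK metric in $c_1(\lineBundle)$, which is exactly the conclusion sought.

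The only point that requires a small comment, rather than an obstacle per se, is the verification that the hypotheses of Corollary~\ref{sph.YTD} are genuinely met: the corollary is stated for smooth polarized spherical varieties and takes $\group$-uniform K-stability (in the sense of Section~\ref{sec_stability}) as input, which is precisely the notion established by the theorem above. There is therefore no mismatch of definitions to reconcile, and the proof reduces to a single sentence citing the two results in sequence.
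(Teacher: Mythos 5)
Your proposal is correct and is exactly the paper's argument: the paper derives this corollary by applying the immediately preceding theorem (whose hypotheses are the conditions on \(L_{r+1}\) and \(b\)) to obtain \(\group\)-uniform K-stability, and then invoking Corollary~\ref{sph.YTD} from Odaka's appendix, which uses the smoothness of \(\variety\) to pass from \(\group\)-uniform K-stability to the existence of a cscK metric in \(c_1(\lineBundle)\). No further commentary is needed.
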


On the other hand, from the point of view of K-stability alone, our theorem can be interpreted in the following way. 
\begin{cor}
Under the assumption that \(L_{r+1}\) is non-negative on \(\polytope\), \(\group\)-uniform K-stability is equivalent to \(\group\)-stc K-polystability for \(\polVar\). 
\end{cor}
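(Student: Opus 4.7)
The plan is to prove the two implications separately. The forward direction will not require the hypothesis $L_{r+1}\geq 0$ and follows quickly from the definitions; the reverse direction is a direct consequence of the preceding Full Statement theorem, once one identifies $\group$-stc K-polystability with the combinatorial barycenter condition appearing there.

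For the forward implication, suppose $\polVar$ is $\group$-uniformly K-stable and let $\polVarTC$ be a $\group$-equivariant special test configuration. Uniform K-stability already gives $\MNA(\polVarTC)\geq 0$, and if equality holds then it also forces $\inf_{\eta}\JNA(\polVarTC_{\eta})=0$. By Theorem~\ref{thm_TC} the test configuration is encoded by an integral linear function $g(x)=\langle s,x\rangle+c$ with $s\in\valCone\cap\dualLattice$, whose rational twists are $g+l$ for $l\in\linValCone\cap\rationalDualLattice$. The formula for $\JNA$ from Theorem~\ref{thm_NA} and positivity of $\DHpol$ on the interior of $\polytope$ show that $\JFunctional(g+l)$ vanishes iff $g+l$ is constant, so that the infimum above vanishes iff $s\in\linValCone$, which is precisely the product condition.

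For the converse, assume $L_{r+1}\geq 0$ and that $\polVar$ is $\group$-stc K-polystable. By Theorem~\ref{thm_Kstab}(1) restricted to integral linear functions, $\LFunctional(l)\geq 0$ for every $l\in\valCone\cap\dualLattice$, with equality only when $l\in\linValCone$. The computation at the end of the proof of the Full Statement theorem identifies $\LFunctional(l)=-C_3\,l(b)$ for a positive constant $C_3$ and with $b$ the barycenter appearing in that theorem; since $\valCone$ is a rational polyhedral cone whose faces are generated by rational vectors, the inequality and its equality case extend by density to all $l\in\valCone$, which is exactly the statement that $b$ lies in the relative interior of $-\valCone^{\vee}$. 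Applying the Full Statement theorem then yields $\group$-uniform K-stability. The only point requiring any mild care is this rational-to-real extension; otherwise the corollary is a direct repackaging, with the hypothesis $L_{r+1}\geq 0$ serving only as the bridge between the combinatorial polystability condition and the quantitative conclusion of the Full Statement theorem.
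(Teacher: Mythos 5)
Your proof is correct and follows essentially the same route as the paper: identify $\group$-stc K-polystability with the barycenter condition via the correspondence of Theorems~\ref{thm_TC} and~\ref{thm_NA} (special test configurations $\leftrightarrow$ linear functions, $\MNA = \LFunctional/2V$), then invoke the full-statement theorem for the converse and the easy implication from uniform K-stability for the forward direction. You merely spell out two points the paper leaves implicit — the coercivity of $l \mapsto \JFunctional(g+l)$ needed to conclude that the infimum vanishes only for product configurations, and the rational-to-real extension of the inequality on $\valCone$ — both of which are handled correctly.
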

In particular, this point of view shows that our barycenter condition is in fact necessary. 

\begin{proof}
As noted at the end of the proof of Theorem~\ref{thm_sufficient}, in Section~\ref{sec73}, the condition that \(b\) is in the relative interior of \( -\valCone^{\vee}\) is equivalent to \(\LFunctional(l)\geq 0\) for all \(l\in\valCone\), with equality if and only if \(l\in \linValCone\). 
By Theorem~\ref{thm_TC}, this is equivalent to \(\group\)-stc K-polystability. 
\end{proof}

\begin{rem}[On the toric case]
In the toric case, the statement simplifies a lot; let us state it anew. 
Let \((X,L)\) be a polarized toric variety with integral moment polytope \(\polytope\subset \bbR^r\), such that \(0\) is in the interior of \(\polytope\). 
Number the facets of the polytope \(\Delta\) by \(E_i\) for \(1\leq i\leq s\), and let \(n_i\) be the positive numbers such that 
\[ \polytope = \{ x \in \bbR^r \mid u_i\cdot x \leq n_i \}, \]
where \(u_i\in \bbZ^r\) denotes the outward-pointing primitive normal to \(E_i\). 
In this case, we have \(L_m\equiv mn_i^{-1}-2\scalar\) in the interior of the convex hull of \(E_i\cup\{0\}\),   where \(\scalar \) is such that \(\int_{\polytope} L_r \lebesgue = 0\). 
The condition thus becomes: assume that \((r+1)n_i^{-1}-2\scalar\leq 0\) for all \(i\) and that the barycenter of \(\polytope\) with respect to the measure \(L_{r+1} \lebesgue\) is zero. 
If these conditions are satisfied, then \(\polVar\) is \(\group\)-uniformly K-stable.  

This statement may be new in the singular and uniform K-stability setting, but actually the whole proof in this toric situation is essentially contained in Zhou and Zhu's arguments to prove a sufficient condition for coercivity of the Mabuchi functional in \cite{ZZ08}. 
\end{rem}

\section{Example: Blowup of \texorpdfstring{\(\boldsymbol{\mathcal{Q}^3}\)}{Q\textasciicircum 3} along  \texorpdfstring{\(\boldsymbol{\mathcal{Q}^1}}{Q\textasciicircum 1}\)}
\label{sec_example}

In this section, we study the blowup \(X\) of the three-dimensional quadric along a one-dimensional subquadric. This example was previously considered in \cite{DH} and \cite{infinite}, where it was presented in more details. 
The Picard rank of this variety is two. 

The connected reductive group \(G\) making \(X\) a rank two spherical variety is \(\SL_2\times \bbC^*\). 
We fix a choice of maximal torus and Borel subgroup. 
Let \(\alpha\) denote the unique positive root, and let \(f\) denote the character of weight \(1\) of \(\bbC^*\). 
The spherical lattice \(\sphericalLattice\) is the lattice generated by \(\alpha\) and \(\frac{1}{2}(\alpha + f)\). 
The dual lattice \(\dualLattice\) is the lattice generated by \(\alpha^{\vee}\) and \(\frac{1}{2}(\alpha^{\vee}+f^{\vee})\), where \(\alpha^{\vee}\) is the coroot of \(\alpha\) and \(f^{\vee}\) is defined similarly by \(f^{\vee}(f)=2\) and \(f^{\vee}(\alpha)=0\). 
The valuation cone \(\mathcal{V}\) is the dual cone to \(\bbR_-\alpha\). 
Finally, we have \(\relevantRoots=\{\alpha\}\) and \(\halfSum=\alpha\). 

The moment polytope for an ample line bundle on \(X\) is, up to scaling, of the following form for some \(s>3/2\):  
\[\polytope(s):= \{x \alpha + y f \in \realSphericalLattice \mid 0\leq x\leq 3/2,\;\; x-s \leq y \leq s-x \}. \]
%drawing
Its four facets \(E_0,\ldots,E_3\) have respective outward-pointing primitive normals \(u_0=-\alpha^{\vee}\), \(u_1=\frac{1}{2}(\alpha^{\vee}+f^{\vee})\),
\(u_2=\alpha^{\vee}\) and \(u_3=\frac{1}{2}(\alpha^{\vee}-f^{\vee})\). 

\begin{figure}
\centering
\caption{The polytope $\Delta(s)$ for \(s=2\)}
\begin{tikzpicture}  
\draw [dotted] (-3,-3) grid[xstep=1,ystep=1] (3,3);
\draw (0,0) node{$\bullet$};
\draw (0,0) node[above left]{$0$};
\draw [thick] (-2,-2) -- (1,-2) -- (2,-1) -- (2,2) -- cycle;
\draw (1.5,-1.5) node{$\bullet$};
\draw (1.5,-1.5) node[below right]{$\frac{3}{2}\alpha$};
\draw (2,2) node{$\bullet$};
\draw (2,2) node[above right]{$s f$};
\end{tikzpicture}
\end{figure}
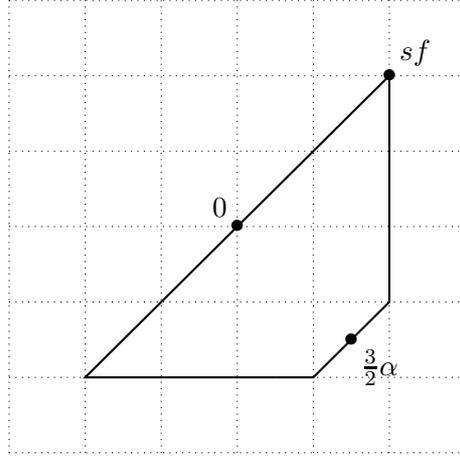

Before trying to apply our theorem, let us compute the important quantity \(\scalar\). For this, it is more convenient to deal with the boundary integral
\[ 2\scalar = \frac{\int_{\polytope}2\coDHpol\lebesgue + \int_{\partial\polytope}\DHpol\lebesgueBoundary}{\int_{\polytope}\DHpol\lebesgue}. \]
We have \(\DHpol(x\alpha+yf)=2x\), \(\coDHpol\equiv 1\), \(\lebesgue = 2 \mathop{dx}\mathop{dy}\) in the coordinates \(x\alpha+yf\), \(\lebesgueBoundary|_{E_1}=\lebesgueBoundary|_{E_3} = 2\mathop{dx}\) if we parametrize by \(x\) and \(\lebesgueBoundary|_{E_0}=\lebesgueBoundary|_{E_2} = \mathop{dy}\). 
We thus have 
\begin{align*} 
\int_{\polytope} 2\coDHpol\lebesgue & = \int_{x=0}^{\frac{3}{2}} \int_{y=x-s}^{s-x} 4 \mathop{dy}\mathop{dx} = 12 s - 9, \\
\int_{\polytope}\DHpol\lebesgue & = \int_{x=0}^{\frac{3}{2}} \int_{y=x-s}^{s-x} 4x \mathop{dy}\mathop{dx} = 9(s-1), \\
\int_{E_0} \DHpol \lebesgueBoundary & = 0, \\
\int_{E_1} \DHpol \lebesgueBoundary = \int_{E_3} \DHpol \lebesgueBoundary & = \int_{0}^{\frac{3}{2}} 4x\mathop{dx} = \frac{9}{2}, \\
\int_{E_2} \DHpol \lebesgueBoundary & = \int_{\frac{3}{2}-s}^{s-\frac{3}{2}} 3 \mathop{dy} = 6s-9, 
\end{align*}
and 
\[ 2\scalar = \frac{2s-1}{s-1}. \]

By similar computations, we can check whether the barycenter condition involved in our theorem, or equivalently, the \(\group\)-stc K-polystability, holds.
This amounts to the two conditions \(\LFunctional(f^{\vee})=0\) and \(\LFunctional(-\alpha^{\vee})>0\). 
The first of these conditions is automatic by the symmetry of the moment polytopes and Duistermaat--Heckman polynomial. 
We compute the second, using the expression with a boundary integral as for \(2\scalar\); we obtain  
\[ \LFunctional(-\alpha^{\vee})=\frac{9(8s^2-18s+11)}{4(s-1)}, \]
which is positive for any \(s>3/2\). 

We now choose an element of the interior of the polytope (and this is the tricky part to get the theorem to apply). 
For reasons related by the general Fano case to be treated next, we choose \(\weightSection = \frac{1}{2}s \alpha\), which can be considered of course only if \(s<3\). 
Then the translated polytope \(\polytope = -\weightSection + \polytope_+\) is defined by the four equations \(u_i(x \alpha + y f)\leq n_i\) with 
\begin{align*}
n_0  = s, \quad n_1 = n_3  = \frac{s}{2}, \quad n_2  = 3 - s. 
\end{align*}

\begin{figure}
\centering
\caption{The decomposition of $\Delta(s)$}
\begin{tikzpicture}  
\draw (1,-1) node{$\bullet$};
\draw [thick] (-2,-2) -- (1,-2) -- (2,-1) -- (2,2) -- cycle;
\draw (0,0) node[above left]{$E_0$};
\draw (2,0) node[right]{$E_1$};
\draw (2,-2) node{$E_2$};
\draw (0,-2) node[below]{$E_3$};
\draw [dotted] (1,-1) -- (-2,-2);
\draw [dotted] (1,-1) -- (1,-2);
\draw [dotted] (1,-1) -- (2,-1);
\draw [dotted] (1,-1) -- (2,2);
\end{tikzpicture}
\end{figure}
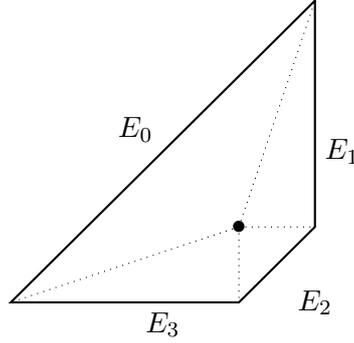

In order to show that for these polarizations, \(\group\)-uniform K-stability is equivalent to the barycenter condition, 
we have to check that for all \(i\) and for all \(x\alpha+yf\in T_i\), we have 
\[ \left\langle \alpha, \left(3n_i^{-1}-2\scalar\right)\left(\left(x+\frac{s}{2}\right)\alpha+yf\right)+\left(n_i^{-1}x+1\right)\alpha+n_i^{-1}yf \right\rangle \geq 0. \]
Recall that \(\langle \alpha, f\rangle = 0\); hence it suffices to check 
\begin{align*}
(4n_0^{-1}-2\scalar)x + (3n_0^{-1}-2\scalar)\frac{s}{2}+1 \geq 0 & \quad\text{ for } -\frac{s}{2} \leq x \leq 0, \\
(4n_1^{-1}-2\scalar)x + (3n_1^{-1}-2\scalar)\frac{s}{2}+1 \geq 0 & \quad\text{ for } -\frac{s}{2} \leq x \leq \frac{3-s}{2}, \\
(4n_2^{-1}-2\scalar)x + (3n_2^{-1}-2\scalar)\frac{s}{2}+1 \geq 0 & \quad\text{ for } 0\leq x \leq \frac{3-s}{2}, \\
(4n_3^{-1}-2\scalar)x + (3n_3^{-1}-2\scalar)\frac{s}{2}+1 \geq 0 & \quad\text{ for } -\frac{s}{2} \leq x \leq \frac{3-s}{2}. 
\end{align*}
Since \(n_1=n_3\), the second and fourth conditions are equivalent. 
Since \(\DHpol\) vanishes on \(E_0\), we can choose any \(n_0\) we want, for example \(n_0=n_1\), so that the first condition is implied by the second. 
We end up with only two conditions to check.

Replacing \(n_1\) and \(2\scalar\) with their expression in \(s\), one of the conditions is 
\[ \frac{2s(-2s^2+9s-8)(2x+s)}{s-1} \geq 0 \quad\text{ for } -\frac{s}{2} \leq x \leq \frac{3-s}{2}. \]
The degree two polynomial \(-2s^2+9s-8\) is non-negative for \(\frac{9-\sqrt{17}}{4} \leq s \leq \frac{9+\sqrt{17}}{4}\), which contains the range \(\frac{3}{2} < s < 3\), and the other factors are easily checked to be non-negative for the values of \(x\) and \(s\) considered, so the condition is satisfied. 

The other condition, replacing \(n_2\) and \(2\scalar\) with their expression in \(s\), is 
\[ \frac{2s^2-3s-1}{(s-1)(3-s)} x + \frac{s^3-3s^2+4s-3}{(3-s)(s-1)} \geq 0 \quad\text{ for } 0 \leq x \leq \frac{3-s}{2}. \]
Since \(s>\frac{3}{2}>1\), one easily checks that the coefficient of \(x\) is positive. 
Hence the condition is satisfied if 
\[ 0 \geq -\frac{s^3-3s^2+4s-3}{2s^2-3s-1}. \]
Again, for \(s>1\) as here, this is equivalent to 
\[ s^3-3s^2+4s-3 \geq 0, \]
and one can check that the unique real root \(s_0\) of this cubic polynomial is approximately \(s_0 \approx 1.6823\), in the range of \(s\) considered. 

To sum up, we have shown that \(\group\)-uniform K-stability of the polarized variety is equivalent to the barycenter condition if \(s\) is such that \(s_0 \approx 1.6823 \leq s < 3\). 
We have thus proved that \(X\) admits a cscK when \(s_0 \leq s < 3\). 
It is very likely that our choice of \(\weightSection\) was not the optimal one and that one can push further the use of our main theorem to get a slightly larger range of classes with cscK metrics. 
The natural question regarding this example is whether it is a Calabi dream space in the sense of Chen and Cheng, that is, if all classes admit cscK metrics. 
It is unlikely that our main theorem is enough for this, but we intend to answer this question in a later work by studying optimal degenerations for rank two spherical threefolds. 

\section{Fano case}
\label{sec_Fano}

In this section, we apply our main theorem to the case of a Fano spherical manifold equipped with its anticanonical polarization. 

\begin{thm}
\label{thm_Fano}
Assume that \(\variety\) is \(\bbQ\)-Fano and that \(\lineBundle\) is $($a multiple of\;$)$ the anticanonical \(\bbQ\)-line bundle of \(\variety\). Then \(L_{r+1}\) is positive on \(\polytope\). 
\end{thm}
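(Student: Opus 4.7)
The plan rests on two ingredients specific to the anticanonical Fano case.

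First, by Kodaira vanishing (for $\bbQ$-Fano klt varieties) and Hirzebruch--Riemann--Roch, the Hilbert polynomial $\dim H^0(\variety, -kK_\variety)$ has leading terms $\frac{(-K_\variety)^n}{n!}k^n + \frac{(-K_\variety)^n}{2(n-1)!}k^{n-1} + o(k^{n-1})$. Matching with the Pukhlikov--Khovanskii expansion~\eqref{Pukhlikov-Khovanskii} gives $\tfrac{1}{2}\int_{\boundary}\DHpol\lebesgueBoundary + \int_\polytope \coDHpol\lebesgue = \tfrac{n}{2}V$, hence $2\scalar = n := \dim\variety$.

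Second, Brion's formula identifies the $B$-weight of the anticanonical section as $\weightSection = 2\halfSum^{P_\variety}$, where $P_\variety$ is the parabolic stabilizing the open $B$-orbit in $\variety$. The numbers $n_i$ in the polytope inequalities take the value $n_i = 1$ for facets coming from $G$-stable prime divisors, and $n_i = m_D$ for facets coming from colors $D$, with $m_D$ an explicit positive integer determined by Brion's formula. Two consequences I would use: (i) $2\halfSum - \weightSection = 2\halfSum_{L_\variety}$ where $L_\variety$ is the Levi of $P_\variety$; and (ii) $\relevantRoots$ coincides with the set of positive roots outside the root system of $L_\variety$, and in particular is stable under the Weyl group $W_{L_\variety}$.

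With these in hand, Euler's identity applied to the degree-$|\relevantRoots|$ polynomial $\DHpol(\cdot - \weightSection)$ gives $d_x\DHpol(x) = |\relevantRoots|\DHpol(x) - d_x\DHpol(\weightSection)$. Substituting this together with $2\scalar = n = r + |\relevantRoots|$ into the formula for $L_{r+1}$ on each pyramid $T_i$, the positivity $L_{r+1}(x) > 0$ reduces, after dividing by the positive quantity $|\relevantRoots|\DHpol(x)/n_i$, to the pointwise inequality
\begin{equation*}
\sum_{\alpha \in \relevantRoots} \frac{\langle \alpha, 2 n_i \halfSum - \weightSection \rangle}{\langle \alpha, x + \weightSection \rangle} \;>\; n(n_i - 1) - 1
\end{equation*}
on the interior of $T_i$. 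All denominators are positive since $x + \weightSection$ lies in the interior of the dominant Weyl chamber. For $G$-stable facets ($n_i = 1$) the right-hand side is $-1$ and each numerator equals $\langle\alpha, 2\halfSum_{L_\variety}\rangle$; the inequality can then be derived by exploiting the $W_{L_\variety}$-invariance of $x + \weightSection$ (both $\weightSection$ and elements $x \in \realSphericalLattice$ are fixed by $W_{L_\variety}$) to group terms along $W_{L_\variety}$-orbits on $\relevantRoots$ and reduce to a concrete positivity statement. For color facets, $n_i = m_D$ enters and Brion's explicit formula for $m_D$ allows rewriting $\langle\alpha, 2 n_i \halfSum - \weightSection\rangle$ to verify the tighter bound.

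The main obstacle is the color-facet case. The $G$-stable case is essentially controlled by the $W_{L_\variety}$-symmetry of $2\halfSum_{L_\variety}$, but colors require invoking the explicit type of the color (following Luna's classification) and the corresponding formula for $m_D$ in terms of simple roots and $\weightSection$, leading to a case-by-case verification.
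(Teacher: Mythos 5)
Your setup is sound: the choice $\weightSection=2\halfSum_X$, the Euler-identity reduction on each pyramid, and the use of the Levi Weyl group to control the contribution of $2\halfSum-\weightSection=2\halfSum_0$ all match the paper's argument (in fact the sum you need to bound below by $-1$ on a $\group$-stable facet is exactly zero, since the longest element of that Weyl group sends $\halfSum_0$ to $-\halfSum_0$), and your Riemann--Roch derivation of $2\scalar=n$ is a legitimate shortcut where the paper instead deduces $2\scalar=r+\mathrm{Card}(\relevantRoots)$ at the very end from the normalization $\int_{\polytope}L_r\lebesgue=0$. But the color-facet case, which you yourself flag as the main obstacle and leave to a ``case-by-case verification,'' is precisely where the proof lives, and the route you propose for it --- verifying the pointwise inequality with the true value of $n_i$ coming from the color --- is not the right move and I do not believe it closes.

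The missing observation is the following. Among the three types of colors, those of type $a$ satisfy $n_D=1$ and $\alpha(\colorMap(D))=1$, so $\colorMap(D)$ is primitive and the facet genuinely has $n_i=1$: it is handled exactly like the $\group$-stable facets, with right-hand side $-1$ and left-hand side $0$. For the remaining types ($2a$ and $b$), one has $\colorMap(D)=\tfrac12\alpha^{\vee}|_{\sphericalLattice}$ resp.\ $\alpha^{\vee}|_{\sphericalLattice}$ with $n_D=\tfrac12\alpha^{\vee}(2\halfSum_X)$ resp.\ $\alpha^{\vee}(2\halfSum_X)$, so the supporting hyperplane of the facet is $\langle\alpha,x+2\halfSum_X\rangle=0$, a wall of the dominant chamber on which $\DHpol$ vanishes identically. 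By Remark~\ref{rem_expression_L} (equivalently Remark~\ref{rem_expression_JK}), the value of $n_i$ on such a facet is irrelevant and may be replaced by $1$, after which $L_{r+1}$ collapses to a positive multiple of $\prod_{\alpha\in\relevantRoots}\langle\alpha,x+\weightSection\rangle$ uniformly on $\polytope$ and positivity is immediate. Without this degeneracy you would be trying to establish your displayed inequality with $n_i>1$ on the entire pyramid over such a facet (including near the apex, where nothing blows up to help you), which is both unnecessary and not clearly true; no case-by-case estimate on the color coefficients is needed.
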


This shows, with a very straightforward proof, that for spherical Fano varieties, \(\group\)-uniform K-stability is equivalent to \(\group\)-stc K-polystability and allows one to recover the explicit combinatorial condition for this obtained in \cite{DelKSSV}. 

\begin{proof}
Assume for simplicity that \(\variety\) is Gorenstein and \(\lineBundle=K_X^{-1}\). Apart from notational issues, the general case is the same. Assume furthermore that \(K_X^{-1}\) is equipped with its canonical \(\group\)-linearization. 

The proposition will follow from the judicious choice \(\weightSection = 2\halfSum_X = \sum_{\alpha\in\relevantRoots}\alpha\) and two steps: 
\begin{enumerate}
\item Using Remark~\ref{rem_expression_L}, we can assume that \(n_i=1\) for all \(i\) in the expression of \(L_m\). 
\item We can replace \(2\halfSum\) with \(2\halfSum_X\) in the expression of \(L_m\).  
\end{enumerate}

Let us begin with the simple case of toric varieties. 
We obviously have \(\halfSum = \halfSum_X = 0\) in the toric case. 
The torus-invariant section of \(K_X^{-1}\) has weight zero, which corresponds to the unique interior integral point of the moment polytope \(\polytope_+\). 
Finally, the polytope \(\polytope_+\) is defined by equations \(u_i(x)\leq 1\) for a set of primitive elements \(u_i\) in \(\dualLattice\); hence by choosing \(\weightSection=0\), we have \(n_i=1\) for all \(i\). 
Actually, an integral polytope is reflexive if and only if it is defined by equations \(u_i(x)\leq 1\) for a set of primitive elements \(u_i\) in \(\dualLattice\), and it is well known that Gorenstein Fano toric varieties correspond to reflexive polytopes. 

For the general case, we will use the description of the anticanonical divisor of spherical varieties by Brion in \cite{Bri87} as formulated in \cite{GH15Fano}.  
Namely, there exists a section of the anticanonical line bundle, with \(\borel\)-weight \(2\halfSum_X\), whose divisor  \(\sum_{D\in\divBinv} n_D D\) is such that \(n_D = 1\) if \(D\) is \(\group\)-stable, and the description of the coefficients of colors (closures of \(\borel\)-stable divisors in \(\homo\)) is explicit, depending on the type of each color. 
Since we need the details, let us quickly recall the possible types of colors. 

Let \(S\) denote the set of simple roots of \(\group\). 
For \(\alpha\in S\), let \(P_{\alpha}\) denote the largest parabolic in \(\group\) containing \(\borel\) such that \(-\alpha\) is not a root of \(P_{\alpha}\). 
Let \(\mathcal{D}(\alpha)\) denote the set of \(\borel\)-stable prime divisors of \(X\) that are not \(P_{\alpha}\)-stable. 
It turns out that these divisors exhaust the set of divisors in \(\divBinv\) that are not \(\group\)-stable and that \(\mathcal{D}(\alpha)\) is non-empty precisely if \(\alpha\in\relevantRoots\). 
An element \(D\in \mathcal{D}(\alpha)\) is 
\begin{itemize}
\item of type \(a\) if \(\alpha\) is a primitive element of \(\sphericalLattice\), 
\item of type \(2a\) if \(2\alpha\) is a primitive element of \(\sphericalLattice\),
\item and of type \(b\) otherwise. 
\end{itemize}
The coefficient \(n_D\) is then obtained, depending on the type of \(D\in \mathcal{D}(\alpha)\), as follows:
\begin{itemize}
\item \(n_D = \frac{1}{2} \alpha^{\vee}(2\halfSum_X) = 1\) for type \(a\) or \(2a\), 
\item \(n_D = \alpha^{\vee}(2\halfSum_X)\) for type \(b\),
\end{itemize}
where \(\alpha^{\vee}\) denotes the coroot of \(\alpha\).  

Now consider the polytope \(\polytope\) associated to the section constructed by Brion. 
We want to check that we can take \(n_i = 1\) for each facet. 
Recall that the equations defining \(\polytope\) from the coefficients of the divisor are the \(\colorMap(D)(x)+n_D\geq 0\) for \(D\in\divBinv\). 
In particular, the equations defining the facets are of the form \(\colorMap(D)(x)+n_D = 0\). 
To put these in the form \(u_i(x) = n_i\), where \(u_i\) is a primitive outer normal, one has to find the positive number \(a_i\) such that \(-\colorMap(D)/a_i\) is primitive, in which case one can take \(n_i = n_D/a_i\).

Assume that \(D\in \divBinv\) defines a facet of \(\polytope\).

Whenever \(D\) is \(\group\)-stable, \(\colorMap(D)\) is primitive and \(n_D=1\), so we have \(n_i=1\). 

Next, assume that \(D\) is a color of type \(a\), and let \(\alpha\in S\) be such that \(D\in \mathcal{D}(\alpha)\). 
Then  \(\alpha(\colorMap(D)) = 1\) (see \cite{GH15homo} for a convenient summary of the properties of colors by type), which implies that \(\colorMap(D)\) is primitive. 
Since \(n_D=1\) as well in this case, we indeed have \(n_i=1\). 

Now, assume that \(D\) is a color of type \(2a\), and let \(\alpha\in S\) be such that \(D\in \mathcal{D}(\alpha)\). 
Then \(\colorMap(D) = \frac{1}{2} \alpha^{\vee}|_{\sphericalLattice}\). 
If \(x\) is in the facet defined by \(\colorMap(D)(x)+n_D = 0\), then we have \(\frac{1}{2}\alpha^{\vee}(x)+\frac{1}{2}\alpha^{\vee}( 2\halfSum_X) = 0\), which implies \(\langle \alpha, x+2\halfSum_X \rangle = 0\). 
Since \(\weightSection=2\halfSum_X\), this in turn implies that \(\DHpol\) vanishes on the facet. 
By Remark~\ref{rem_expression_L}, we can then choose \(n_i=1\) in the expressions of \(L_r\) and \(L_{r+1}\). 

Similarly, if \(D\) is a color of type \(b\) and \(\alpha\in S\) is such that \(D\in \mathcal{D}(\alpha)\), we have \(\colorMap(D) = \alpha^{\vee}|_{\sphericalLattice}\) and \(n_D = \alpha^{\vee}(2\halfSum_X)\); hence \(\DHpol\) must vanish on the facet. 
Again by Remark~\ref{rem_expression_L}, we can choose \(n_i=1\) in the expressions of \(L_r\) and \(L_{r+1}\). 

We now turn to the problem of replacing \(\halfSum\) by \(\halfSum_X\) in the expression of \(L_m\). 
The important property of \(\relevantRoots\) is that it consists of all roots of the unipotent radical of some parabolic subgroup \(P_X\) of \(\group\), namely the stabilizer of the open \(\borel\)-orbit in \(\variety\). 
As a consequence, we may write \(\halfSum_X=\halfSum-\halfSum_0\), where \(\halfSum_0\) is the sum of positive roots of the Levi subgroup of \(P_X\). 
Let \(W_0\) denote the Weyl group of this Levi subgroup, which is a subgroup of the Weyl group of \(\group\). 
The action of \(W_0\) on roots of \(\group\) induces a permutation of \(\relevantRoots\). 
Consider the linear function 
\[h\colon y \mapsto \sum_{\alpha\in\relevantRoots} \langle \alpha, w\cdot y \rangle \prod_{\beta\in\relevantRoots\setminus\{\alpha\}}\langle \beta, x+\weightSection\rangle. \]
The discussion above shows that \(h\) is invariant under the action of \(W_0\). 
In particular, \(h(\halfSum_0) = 0\) since there exists a \(w_0 \in W_0\) such that \(w_0(\halfSum_0) = -\halfSum_0\). 
We thus have 
\begin{align*}
L_m(x) & = h\left(\left(mn_i^{-1}-2\scalar\right)(x+\weightSection)+\mathrm{Card}\left(\relevantRoots\right)\left(n_i^{-1}x+2\halfSum\right)\right) \\
& = h\left(\left(mn_i^{-1}-2\scalar\right)(x+\weightSection)+\mathrm{Card}\left(\relevantRoots\right)\left(n_i^{-1}x+2\halfSum_X\right)\right).
\end{align*}

Putting together all ingredients in the case \(\lineBundle=K_X^{-1}\) (\(\weightSection=2\halfSum_X\), all \(n_i=1\), and we can replace \(2\halfSum\) with \(2\halfSum_X\)), we have 
\begin{align*} 
L_m(x) & = h\left(\left(m-2\scalar+\mathrm{Card}\left(\relevantRoots\right)\right)(x+\weightSection)\right) \\
& =  \left(m-2\scalar+\mathrm{Card}\left(\relevantRoots\right)\right) \prod_{\alpha\in\relevantRoots} \langle \alpha, x+\weightSection \rangle.  
\end{align*}
Since \(\prod_{\alpha\in\relevantRoots} \langle \alpha, x+\weightSection \rangle \) is positive on the interior of \(\polytope\) and \(\int_{\polytope} L_r \lebesgue = 0\), we deduce that \(2\scalar = r + \mathrm{Card}(\relevantRoots)\). 
This number actually coincides with the dimension of the variety and could be recovered by interpreting directly \(2\scalar\) as the average scalar curvature of the Fano variety \(X\). 

Finally, 
\[ L_{r+1}(x) = \prod_{\alpha\in\relevantRoots} \langle \alpha, x+\weightSection \rangle \]
is strictly positive on the interior of \(\polytope\) and non-negative on the whole polytope. 
\end{proof}

\section{Polarizations close to the anticanonical line bundle}
\label{sec_open}

In this final section, we illustrate how our main theorem applies to different situations to give equivalence of \(\group\)-uniform K-stability and \(\group\)-stc K-polystability for polarizations close to the anticanonical line bundle. 
We however believe that the criterion better shows its strength when applied to a concrete situation as in Section~\ref{sec_example}. 

\begin{prop}
\label{prop_open}
Let \(\polVar\) be a \(\group\)-spherical variety, and assume that there exist a choice of \(\weightSection\in \polytope_+\) and a positive number \(\delta\) such that \(L_{r+1}>\delta\) on \(\polytope\). 
Then for polarizations close to \(\lineBundle\), \(\group\)-uniform K-stability is equivalent to \(\group\)-stc K-polystability, which is equivalent to the barycenter condition. 
\end{prop}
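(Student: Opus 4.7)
The plan is to reduce the proposition to an openness statement and then establish that openness via continuous dependence of the combinatorial data on the polarization. By the corollary stated at the end of Section~\ref{sec_full}, once we know that $L_{r+1}$ is non-negative on the polytope associated to a polarization, the equivalence of $\group$-uniform K-stability with $\group$-stc K-polystability, and with the barycenter condition, is automatic. Thus it suffices to show that the condition ``there exists a choice of $\weightSection$ such that $L_{r+1} \geq 0$ on $\polytope$'' is open as the polarization varies near $\lineBundle$.

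First I would parameterize nearby polarizations. Ample $\group$-linearized classes close to $[\lineBundle]$ have moment polytopes of a fixed combinatorial type: the primitive outer normals $u_i \in \dualLattice$ are preserved and only the defining constants $n_i$ vary continuously in a small open set of parameters. Since $\weightSection$ lies in the interior of $\polytope_+$ (which is where the hypothesis $L_{r+1} > \delta$ is meaningful, as $\DHpol$ would otherwise vanish on pieces of the boundary), for small enough perturbations the same $\weightSection$ remains in the interior of the perturbed moment polytope, and the perturbed translated polytope $\polytope'$ converges to $\polytope$ in Hausdorff distance.

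Next I would verify that $L_{r+1}$ depends continuously on the polarization. The polynomials $\DHpol$ and $\coDHpol$ from \eqref{eqn_DHpol} and \eqref{eqn_coDHpol} depend only on $\weightSection$ and $\relevantRoots$, so they are insensitive to the perturbation. The scalar $\scalar$, expressed as a ratio of integrals of these polynomials over $\polytope'$ and $\partial\polytope'$, depends continuously on the $n_i$. The pyramids $T_i' = \mathrm{conv}(\{0\} \cup E_i')$ move continuously, and on each one $L_{r+1}$ is a polynomial in $x$ whose coefficients are continuous functions of the $n_i$ and $\scalar$. Hence $L_{r+1}'$ depends continuously on the polarization in the supremum norm on a fixed compact set containing all nearby polytopes.

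Starting from the strict uniform lower bound $L_{r+1} > \delta > 0$ on $\polytope$, this continuity yields a neighborhood of $\lineBundle$ in the ample cone on which the corresponding function $L_{r+1}'$ remains strictly positive (for instance bounded below by $\delta/2$) on the associated polytope $\polytope'$. Applying the corollary from Section~\ref{sec_full} then gives the claimed equivalences. The main obstacle is a bookkeeping one: one must check that the piecewise definition of $L_{r+1}$ via the pyramids $T_i$ varies well under perturbation. This is handled by fixing the combinatorial type of $\polytope_+$, which is stable under small perturbations, so that facets $E_i$, cones $T_i$, constants $n_i$, and the polynomial pieces of $L_{r+1}$ all vary continuously in the finite-dimensional parameter space of moment polytopes of that type.
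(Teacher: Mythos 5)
Your proposal is correct and follows essentially the same route as the paper: the paper's proof likewise observes that the combinatorial data (and hence $L_{r+1}$) vary continuously with the polarization, so the strict bound $L_{r+1}>\delta$ persists in a neighborhood of $\lineBundle$, and the equivalences then follow from the corollary of Section~\ref{sec_full}. Your version merely spells out the continuity bookkeeping (fixed combinatorial type, fixed $\weightSection$ as the continuous family of base points) in more detail than the paper does.
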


\begin{proof}
The result follows from the simple remark that all the combinatorial
data associated to the polarization vary continuously.  One can consequently 
choose a continuous family of elements of the varying moment
polytopes such that it coincides with \(\weightSection\) on the given
\(\lineBundle\).  Then the corresponding function \(L_{r+1}\) varies
continuously as well, and the condition \(\min L_{r+1} > \delta\) is
an open condition.
\end{proof}

\begin{cor}
Let \(\polVar\) be a Gorenstein Fano toroidal horospherical variety. 
Then there exists a neighborhood of the anticanonical line bundle where \(\group\)-uniform K-stability is equivalent to vanishing of the Futaki invariant. 
\end{cor}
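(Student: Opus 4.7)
The plan is to chain together the preceding results of the paper in a straightforward way, with the one non-combinatorial input being the well-known fact that the valuation cone of a horospherical variety is the full space $\realDualLattice$. First I will apply Theorem~\ref{thm_Fano} to the Gorenstein Fano variety $\variety$ with its anticanonical polarization $\lineBundle = K_\variety^{-1}$: this yields that, for the choice $\weightSection = 2\halfSum_\variety$ used in the proof of Theorem~\ref{thm_Fano}, the function $L_{r+1}$ is positive on $\polytope$. Since $L_{r+1}$ is continuous and $\polytope$ is compact, there exists $\delta > 0$ with $L_{r+1} > \delta$ on $\polytope$, so the hypothesis of Proposition~\ref{prop_open} is verified at $\lineBundle$. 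Proposition~\ref{prop_open} then supplies a neighborhood of $K_\variety^{-1}$ in the ample cone throughout which $\group$-uniform K-stability is equivalent to $\group$-stc K-polystability, equivalently to the barycenter condition $b \in \operatorname{relint}(-\valCone^{\vee})$.

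The second step is to specialize the barycenter condition using the horospherical hypothesis. A horospherical homogeneous space is, by definition, one whose isotropy contains the unipotent radical of a Borel subgroup; a classical consequence is that every $\group$-invariant valuation is trivially dominated, so $\valCone = \realDualLattice$. In particular $\linValCone = \realDualLattice$ and $-\valCone^{\vee} = \{0\}$, and the barycenter condition collapses to the single equality $b = 0$.

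Next I will identify this condition with the vanishing of the classical Futaki invariant. From the proof of the main theorem of Section~\ref{sec_full}, for any $l \in \realDualLattice$ one has $\LFunctional(l) = -C_3\,\langle l, b \rangle$ with $C_3 > 0$. On the other hand, by Theorem~\ref{thm_TC} the $\group$-equivariant product test configurations correspond, after scaling, precisely to elements of $\linValCone \cap \rationalDualLattice$, so in the horospherical case they exhaust all one-parameter subgroups of the connected center of $\Aut_\group\homo$; their non-Archimedean Mabuchi invariants compute (up to the overall normalizing factor $2V$ from Theorem~\ref{thm_NA}) the Futaki character on $\realDualLattice$. Hence vanishing of the Futaki invariant is equivalent to $\LFunctional \equiv 0$ on $\realDualLattice$, which by the displayed formula is equivalent to $b = 0$. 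Combining with the first two paragraphs completes the equivalence on the neighborhood.

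The main thing to be careful about is the identification of $\group$-equivariant product test configurations with $\linValCone$ and of their non-Archimedean Mabuchi invariants with the classical Futaki invariant, since this is what reduces the barycenter condition to a genuinely linear condition on $\polytope$. The role of the toroidal hypothesis is a mild but convenient one: it ensures that, as the polarization moves in a neighborhood of $K_\variety^{-1}$, the set of facets of the moment polytope and the combinatorial data $(u_i, n_i)$ vary continuously without any facet spontaneously appearing or disappearing through a color, which is what Proposition~\ref{prop_open} needs to apply verbatim. Everything else in the argument is a direct invocation of the results of Sections~\ref{sec_full}, \ref{sec_Fano} and \ref{sec_open}.
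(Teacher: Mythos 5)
Your overall architecture matches the paper's (Theorem~\ref{thm_Fano} plus Proposition~\ref{prop_open}, then the horospherical identification $\valCone=\realDualLattice$ to reduce the barycenter condition to vanishing of the Futaki invariant), and your third paragraph on identifying $b=0$ with the Futaki invariant is fine and in fact more explicit than the paper. But there is a genuine gap in your first step. You read the statement of Theorem~\ref{thm_Fano} as giving $L_{r+1}>0$ on the closed polytope and then extract $\delta>0$ by compactness. What the proof of Theorem~\ref{thm_Fano} actually establishes is $L_{r+1}(x)=\prod_{\alpha\in\relevantRoots}\langle\alpha,x+\weightSection\rangle$, which is strictly positive only on the \emph{interior} of $\polytope$ and merely non-negative on the boundary: it vanishes on any facet of the moment polytope lying in a Weyl wall $\langle\alpha,\cdot\rangle=0$ for $\alpha\in\relevantRoots$, i.e.\ on any facet where $\DHpol$ vanishes. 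For such a variety no $\delta>0$ exists and Proposition~\ref{prop_open} does not apply; this is precisely why the paper must handle symmetric varieties separately by perturbing $\weightSection$ to $2t\halfSum_X$.

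The toroidal hypothesis is exactly what closes this gap, and you have assigned it the wrong job. For a toroidal horospherical variety the moment polytope does not touch the walls of the positive Weyl chamber defined by the roots not in $\relevantRoots$, so $\DHpol$ — and hence $L_{r+1}$ with $\weightSection=2\halfSum_X$ — is strictly positive on the whole closed polytope, and only then does your compactness argument produce $\delta$. Your stated role for toroidality (continuity of the facet data $(u_i,n_i)$ as the polarization varies) is not where it is needed; Proposition~\ref{prop_open} already takes that continuity for granted for arbitrary spherical varieties. Once this point is repaired, the rest of your argument goes through as in the paper.
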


\begin{proof}
For any polarized toroidal horospherical variety, the moment polytope does not touch the walls of the positive Weyl chamber of \(\group\) defined by roots not in \(\relevantRoots\). 
Hence the Duistermaat--Heckman polynomial is positive on each moment polytope. 
As a consequence of the proof of Theorem~\ref{thm_Fano}, with the choice \(\weightSection = 2\halfSum_X\), \(L_{r+1}\) is positive on \(\polytope\) for the anticanonical line bundle. 
As a consequence, we can apply Proposition~\ref{prop_open} to obtain that, in a neighborhood of the anticanonical line bundle, \(\group\)-uniform K-stability is equivalent to \(\group\)-stc K-polystability. 
Finally, all special test configurations for horospherical manifolds are product test configurations since the valuation cone is \(\realDualLattice\) in this case. 
Hence \(\group\)-uniform K-stability with respect to special test configurations is equivalent to vanishing of the Futaki invariant. 
\end{proof}

\begin{prop}
Let \(\variety\) be a Gorenstein Fano \(\group\)-spherical polarized variety such that the open orbit \(\homo\) is a non-Hermitian symmetric variety. 
Then on a neighborhood of the anticanonical line bundle, \(\group\)-uniform K-stability is equivalent to \(\group\)-stc K-polystability. 
\end{prop}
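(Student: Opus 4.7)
The plan is to verify the hypothesis of Proposition~\ref{prop_open} for the anticanonical line bundle, since once we exhibit a choice of $\weightSection$ and a $\delta>0$ with $L_{r+1}>\delta$ uniformly on $\polytope$, the openness part of Proposition~\ref{prop_open} immediately delivers the equivalence on a neighborhood of \(K_X^{-1}\).

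I would take the same choice $\weightSection=2\halfSum_X$ as in the proof of Theorem~\ref{thm_Fano}. With this choice, the computation carried out there—invoking Remark~\ref{rem_expression_L} to set $n_i^{-1}=1$ on the facets where $\DHpol$ vanishes, and using the $W_0$-invariance to replace $2\halfSum$ by $2\halfSum_X$—reduces $L_{r+1}$ to
\[ L_{r+1}(x) = \prod_{\alpha\in\relevantRoots} \langle \alpha, x+\weightSection\rangle, \]
which is strictly positive on the interior of $\polytope$. So the whole content of the proof is to promote this interior positivity to a uniform lower bound on the closed polytope, i.e.\ to check that for every $\alpha\in\relevantRoots$ the linear form $\langle\alpha,\cdot+2\halfSum_X\rangle$ stays bounded away from zero on $\polytope$.

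The main step, where the non-Hermitian symmetric hypothesis enters, is a combinatorial check at the facets of the moment polytope. On facets cut out by $G$-invariant prime divisors (colorless rays of the fan), the relevant linear form $\langle\alpha,\cdot\rangle$ is controlled, and if $\DHpol$ happened to vanish there one could further exploit Remark~\ref{rem_expression_L} as in the toroidal horospherical corollary. On facets cut out by colors $D\in\colorSet(\alpha)$, the structure of the color data for a symmetric spherical homogeneous space is very constrained: colors are only of the $a$ or $2a$ type, and the situation where such a facet of $\polytope_+$ meets the wall $\alpha^{\perp}$ for some $\alpha\in\relevantRoots$ is, to my knowledge, precisely the obstruction that distinguishes Hermitian symmetric spaces (where extra one-parameter groups of automorphisms appear) from the non-Hermitian ones. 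Under the non-Hermitian assumption, this degenerate configuration is ruled out and $\langle\alpha,\cdot+2\halfSum_X\rangle$ stays strictly positive on every facet.

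The hard part is exactly this last assertion: making the above incompatibility precise purely in terms of restricted root systems and spherical data of symmetric pairs, and verifying it against the classification. Granting it, one has $\min_{\polytope}L_{r+1}\geq\delta>0$ for the anticanonical polarization, so Proposition~\ref{prop_open} applies and gives the desired equivalence of \(\group\)-uniform K-stability with \(\group\)-stc K-polystability on an open neighborhood of \(K_X^{-1}\).
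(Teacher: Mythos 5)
There is a genuine gap, and it sits exactly at the step you yourself flag as ``the hard part.'' With the choice \(\weightSection=2\halfSum_X\), the computation of Theorem~\ref{thm_Fano} gives \(L_{r+1}(x)=\prod_{\alpha\in\relevantRoots}\langle\alpha,x+2\halfSum_X\rangle\), which is a positive multiple of the Duistermaat--Heckman polynomial on \(\polytope_+\); it therefore vanishes on every facet of \(\polytope_+\) contained in a wall \(\alpha^{\perp}\), \(\alpha\in\relevantRoots\). Your proposal rests on the claim that the non-Hermitian hypothesis rules out such facets. That claim is false: the paper states explicitly, in the very proof of this proposition, that for a symmetric space the facets cut out by (negative) restricted coroots are exactly those on which \(\DHpol\) vanishes, and such facets are present for essentially every polarized symmetric variety with colors (in contrast with the toroidal horospherical case, where the moment polytope avoids the relevant walls and your strategy would work verbatim). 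The non-Hermitian hypothesis is used for a different purpose: it guarantees that \(2\halfSum_X\) (the half sum of positive \emph{restricted} roots) lies in \(\realSphericalLattice\), so that \(\weightSection\) may be taken proportional to it, and that every facet normal is either a negative restricted coroot or an element of the positive restricted Weyl chamber. So with \(\weightSection=2\halfSum_X\) there is no uniform \(\delta>0\), and Proposition~\ref{prop_open} cannot be invoked as you propose.

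The paper's actual argument repairs this by perturbing the base point: it takes \(\weightSection=2t\halfSum_X\) with \(t<1\) close to \(1\). Then \(L_{r+1}\) is no longer a pure product; it acquires the correction term
\[
\sum_{\alpha\in\relevantRoots}\Bigl\langle \alpha,\ \mathrm{Card}(\relevantRoots)\bigl(2\halfSum_X-n_i^{-1}\weightSection\bigr)\Bigr\rangle\prod_{\beta\in\relevantRoots\setminus\{\alpha\}}\langle\beta,x+\weightSection\rangle,
\]
and the whole point is that \(2\halfSum_X-n_i^{-1}\weightSection=(1-tn_i^{-1})\,2\halfSum_X\) with \(1-tn_i^{-1}>0\): on the coroot facets (where \(\DHpol\) vanishes) one is free to choose \(n_i\) by Remark~\ref{rem_expression_L}, and on the remaining facets \(n_i=1-u_i((t-1)2\halfSum_X)\) makes \(1-tn_i^{-1}\) positive precisely when \(t<1\). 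This correction term is strictly positive where the product term degenerates, yielding \(L_{r+1}>\delta>0\) on all of \(\polytope\), after which the openness argument of Proposition~\ref{prop_open} applies. To salvage your write-up you would need to replace your ``main step'' by this perturbation (or an equivalent device); as stated, the step you defer to the classification of symmetric pairs is not merely unproved but contradicted by the combinatorics of the situation.
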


\begin{proof}
For a non-Hermitian symmetric space \(\homo\), the valuation cone is the negative Weyl chamber defined by a root system in (a subspace of) \(\realSphericalLattice\) (the restricted root system of the symmetric space), and the images of colors in \(\realDualLattice\) are exactly one positive multiple of each simple coroot of this root system (restricted coroots). 
As a consequence, the outward-pointing normals to facets of  moment polytopes (which are always in \(\realSphericalLattice\) if the symmetric space is \emph{not} Hermitian) are either negative restricted coroots or elements of the positive restricted Weyl chamber. 
Furthermore, the name \emph{restricted} is appropriate in the sense that restricted roots are exactly (doubles of) restrictions of roots in \(\relevantRoots\) to \(\realDualLattice\). 
Finally, the restriction of the Duistermaat--Heckman polynomial to a facet vanishes exactly when the facet is defined by a restricted coroot. 

Consider the anticanonical line bundle on \(\variety\), and instead of \(\weightSection=2\halfSum_X\) as in Section~\ref{sec_Fano}, consider the element \(\weightSection=2t\halfSum_X\). 
It is still in \(\polytope\) for \(t\) close to \(1\) since in our symmetric situation, \(2\halfSum_X\) is the half sum of positive restricted roots and is an element of \(\realSphericalLattice\). 

We then write, still for the anticanonical line bundle, 
\begin{align*} 
L_{r+1}(x) 
%& = \sum_{\alpha\in\relevantRoots} \langle \alpha, \left((r+1) n_i^{-1} - 2 \scalar \right)(x+\weightSection) + \mathrm{Card}(\relevantRoots)\left( n_i^{-1} x + 2 \halfSum_X \right)  \rangle \prod_{\beta\in \relevantRoots\setminus \{\alpha\}} \langle \beta, x+\weightSection \rangle  \\
& = \left((r+1)n_i^{-1} -2\scalar + \mathrm{Card}\left(\relevantRoots\right)n_i^{-1} \right) \prod_{\alpha\in\relevantRoots} \langle \alpha, x+\weightSection\rangle \\ & \quad + \sum_{\alpha\in\relevantRoots} \left\langle \alpha, \mathrm{Card}\left(\relevantRoots\right)\left( 2 \halfSum_X - n_i^{-1} \weightSection \right)  \right\rangle \prod_{\beta\in \relevantRoots\setminus \{\alpha\}} \langle \beta, x+\weightSection \rangle.
\end{align*}
We know from Section~\ref{sec_Fano} that for every \(i\), 
\[ \left((r+1)n_i^{-1} -2\scalar + \mathrm{Card}\left(\relevantRoots\right)n_i^{-1} \right) \]
is strictly positive if \(t\) is close to \(1\) since these numbers vary continuously with \(t\) and are equal to \(1\) for \(t=1\). 
For the other term, we have
\[ 2 \halfSum_X - n_i^{-1} \weightSection = \left(1-tn_i^{-1}\right) 2\halfSum_X. \]
The values of \(n_i\) depend on \(t\), but
\begin{itemize}
\item if the Duistermaat--Heckman polynomial vanishes on the facet \(E_i\) (\textit{i.e.}, it is defined by a restricted coroot), then we can \emph{choose} the value of \(n_i\) to ensure that \(1-tn_i^{-1}\) is positive for any \(t\); 
\item else, the outward-pointing normal \(u_i\) to the facet \(E_i\) is in the positive restricted Weyl chamber, and \(n_i=1-u_i((t-1)2\halfSum_X)\), so that 
\[ 1-tn_i^{-1} = \frac{(1-t)(1+u_i(2\halfSum_X))}{1-(t-1)u_i(2\halfSum_X)} \]
is positive when \(t<1\). 
\end{itemize}

We can then fix a choice of \(t\) and \(\weightSection=2t\halfSum_X\) so that the corresponding \(L_{r+1}\) is positive on \(\polytope\). 
Applying the same arguments as for Proposition~\ref{prop_open} yields the conclusion.
\end{proof}

\begin{rem}
Let us stress again that it is very likely that the statements proved above hold more generally for spherical varieties. 
It would for example be rather straightforward to push further the last proposition so that it applies to all \(\bbQ\)-Gorenstein weak Fano spherical varieties whose open orbit is affine. 
We leave to further research the exploration of different special cases or the question of finding an argument applying to general spherical varieties. 
\end{rem}

\renewcommand\thesection{\Alph{section}}
\setcounter{section}{0}

\section*{Appendix. Uniform Yau--Tian--Donaldson conjecture for polarized spherical manifolds, by Yuji Odaka\footnote{Department of Mathematics, Kyoto University, Kyoto 606-8285, Japan\\
\hspace*{18pt} \textit{email:} yodaka@math.kyoto-u.ac.jp}}

\addcontentsline{toc}{section}{Appendix. Uniform Yau--Tian--Donaldson conjecture for polarized spherical manifolds, by  Yuji Odaka}
\refstepcounter{section}

The purpose of this short note is to clarify the following statements. 

\begin{thm}\label{fg}
For any $\bbC^{*}$-equivariant isotrivial projective family 
$\pi\colon \mathcal{X}\to \bbP^{1}$ 
whose general fiber is a $G$-spherical projective variety for a reductive algebraic group $G$, take an arbitrary 
line bundle $\mathcal{L}$ which is ample over the general fiber. 
Note that we do not assume it is also ample on the central fiber. 
$($\cite{Li} called $(\mathcal{X},\mathcal{L})$ a {\rm model}.$)$ 

Then, $\oplus_{m\ge 0}\pi_{*} \mathcal{L}^{\otimes m}$ is a finitely generated 
$\mathcal{O}_{\mathbb{P}^{1}}$-algebra. 
\end{thm}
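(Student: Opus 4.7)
The plan is to recognize $\mathcal{X}$ as a $(\group\times\bbC^{*})$-spherical variety, apply the combinatorial description of global sections of $(\group\times\bbC^{*})$-linearized line bundles on such varieties, and translate finite generation over $\bbC$ to finite generation over $\mathcal{O}_{\bbP^{1}}$ via the $\bbC^{*}$-grading coming from the second factor.

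First I would observe that $\mathcal{X}$ carries a natural structure of $(\group\times\bbC^{*})$-spherical variety. Isotriviality together with $\bbC^{*}$-equivariance implies that the $\group$-action on a general fiber extends to a regular $\group$-action on $\mathcal{X}$ commuting with the $\bbC^{*}$-action on $\bbP^{1}$ (possibly after a finite base change of $\bbP^{1}$, which does not affect finite generation). The Borel subgroup $\borel\times\bbC^{*}$ then acts with an open orbit: over the open locus of $\bbP^{1}$ where the family is trivial, the product of the open $\borel$-orbit in a fiber with $\bbC^{*}$ is such an orbit. Note that $\mathcal{X}$ is complete, being proper over $\bbP^{1}$, so the theory of Section~\ref{sec_spherical} applies directly; this is the same observation already exploited in Section~\ref{sec_test_config}.

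Next I would invoke the polytopal description of sections. Replacing $\mathcal{L}$ by a power if needed to ensure $(\group\times\bbC^{*})$-linearizability (harmless for finite generation by the Veronese trick), I fix a $(\borel\times\bbC^{*})$-semi-invariant rational section $\globalSectionTC$ of $\mathcal{L}$ with weight $\weightSection$ and divisor $\sum_{\hat{D}} n_{\hat{D}}\hat{D}$. Even though $\mathcal{L}$ is not ample on all of $\mathcal{X}$, the decomposition
\[ H^{0}(\mathcal{X},\mathcal{L}^{\otimes k}) \;\cong\; \bigoplus_{m\in k\Delta\cap\sphericalLatticeTC} V_{k\weightSection+m} \]
still holds, where $\Delta=\{m\in \sphericalLatticeTC\otimes\bbR \,:\, \colorMap(\hat{D})(m)+n_{\hat{D}}\ge 0 \text{ for all } \hat{D}\}$: each highest-weight vector in $H^{0}(\mathcal{X},\mathcal{L}^{\otimes k})$ is of the form $f\cdot\globalSectionTC^{\otimes k}$ for a unique $(\borel\times\bbC^{*})$-semi-invariant $f\in\bbC(\mathcal{X})$, and regularity of the resulting section is exactly the polytope inequality. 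This argument does not require global ampleness of $\mathcal{L}$, only completeness of $\mathcal{X}$ (which ensures $\Delta$ is bounded and $H^{0}$ is finite-dimensional).

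I would then conclude as follows. The graded ring $R=\bigoplus_{k\ge 0}H^{0}(\mathcal{X},\mathcal{L}^{\otimes k})$ has its $(U\times 1)$-invariants, with $U$ the unipotent radical of $\borel$, spanned by pairs $(k,m)$ such that $m\in k\Delta\cap\sphericalLatticeTC$, i.e., by the integral points of the rational polyhedral cone $\bbR_{\ge 0}\cdot(\{1\}\times\Delta)$. This monoid is finitely generated by Gordan's lemma, so $R^{U\times 1}$ is a finitely generated $\bbC$-algebra; by a classical theorem of Grosshans, $R$ itself is then finitely generated over $\bbC$. Finally, the $\bbC^{*}$-action from the second factor of $\group\times\bbC^{*}$ induces a weight grading on $R$ whose sheafification over $\bbP^{1}$ is exactly $\bigoplus_{m\ge 0}\pi_{*}\mathcal{L}^{\otimes m}$, yielding the desired finite generation over $\mathcal{O}_{\bbP^{1}}$. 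The main delicate point is the second step: the usual polytope description is typically stated under an ampleness assumption, and one has to check that for the $(\borel\times\bbC^{*})$-semi-invariant presentation of sections it is the divisor of $\globalSectionTC$ alone, not positivity, that matters.
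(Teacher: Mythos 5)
Your opening move — putting a $(\group\times\bbC^*)$-spherical structure on $\mathcal{X}$ — is exactly the paper's, but from there you take a genuinely different route to finite generation of $R=\bigoplus_k H^0(\mathcal{X},\mathcal{L}^{\otimes k})$ over $\bbC$: the paper simply quotes that spherical varieties are Mori Dream Spaces (Brion--Knop, Hu--Keel), whereas you rederive the statement by hand from the multiplicity-free decomposition of sections, Gordan's lemma applied to the rational polyhedral cone $\{(k,m) : \hat{\varrho}(\hat D)(m)+k n_{\hat D}\ge 0\}$, and Grosshans' theorem. That middle portion is correct and arguably more self-contained: the divisorial description of $H^0$ of a $(\borel\times\bbC^*)$-stable divisor needs only normality and completeness, not ampleness, and the $(U\times 1)$-invariant subalgebra of $R$ is indeed the monoid algebra of the lattice points of that cone. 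One omission to repair: the theorem does not assume $\mathcal{X}$ normal, and without normality it is not spherical and the divisorial computation of $H^0$ fails; the paper reduces to the normal case via Eakin--Nagata before anything else.

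The genuine gap is your final sentence. Finite generation of the $\bbC$-algebra $R=\bigoplus_m H^0(\bbP^1,\pi_*\mathcal{L}^{\otimes m})$ does not formally imply finite generation of the sheaf of algebras $\bigoplus_m \pi_*\mathcal{L}^{\otimes m}$ over $\mathcal{O}_{\bbP^1}$: the sheaves $\pi_*\mathcal{L}^{\otimes m}$ need not be globally generated, so there may be local sections near $0$ or $\infty$ not reached by $\mathcal{O}_{\bbP^1}\cdot R$, and ``sheafification of the weight grading'' does not by itself produce them. This is precisely where the paper spends the second half of its proof: it first arranges $p_1^*L\subset\mathcal{L}\subset p_1^*L\otimes p_2^*\mathcal{O}_{\bbP^1}(c)$ (linear boundedness), writes a finite generating set of $R$ as $\bbC^*$-eigenvectors $(p_1^*s_i^{(m,l)})t^l$ with $-cm\le l\le 0$, and then verifies that the finite set obtained by adjoining the $p_1^*s_i^{(m,0)}$ generates the full pushforward algebra once multiplication by the local function $t$ is permitted. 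The gap is fixable inside your own framework: apply the same Gordan-plus-Grosshans argument to the bigraded algebra $\bigoplus_{m,k\ge 0}H^0(\mathcal{X},\mathcal{L}^{\otimes m}\otimes\pi^*\mathcal{O}_{\bbP^1}(k))$ (whose $(U\times 1)$-invariants are again the lattice points of a rational polyhedral cone, obtained by relaxing the inequality attached to the divisor $\mathcal{X}_{\infty}$), then invert the canonical section of $\pi^*\mathcal{O}_{\bbP^1}(1)$ vanishing on $\mathcal{X}_{\infty}$ to compute the algebra of sections over $\bbA^1$ as a degree-zero piece of a localization of a finitely generated ring; the chart at $\infty$ is immediate since the family is a product there and $L$ is ample. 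Some such argument must be supplied; as written the last step is an assertion, not a proof.
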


\begin{cor}\label{sph.YTD}
For a polarized smooth projective $G$-spherical varieties $(X,L)$, 
the $G$-uniform K-(poly)stability in the sense of\, \cite{His1, His2, Li} 
implies the existence of a unique cscK metric. 
\end{cor}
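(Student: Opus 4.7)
The plan is to combine Theorem~\ref{fg} above with the variational machinery of Li \cite{Li} (building on Hisamoto \cite{His1,His2}) and the regularity theory of Chen-Cheng \cite{CC2}. The result of Li that is relevant here is conditional: for a polarized smooth projective variety $(X,L)$ with an action of a complex reductive group $G$, \emph{assuming} that for every $\bbC^{*}$-equivariant isotrivial projective family $\pi\colon \mathcal{X}\to \bbP^{1}$ with general fiber $X$ and every line bundle $\mathcal{L}$ ample only on the general fiber the $\mathcal{O}_{\bbP^{1}}$-algebra $\bigoplus_{m\ge 0}\pi_{*}\mathcal{L}^{\otimes m}$ is finitely generated, one has the implication
\[
\text{$G$-uniform K-stability of }(X,L) \;\Longrightarrow\; \text{existence of a cscK metric in }c_{1}(L).
\]
The whole point of Theorem~\ref{fg} in this appendix is to verify exactly this finite generation hypothesis in the spherical case, with \emph{no} ampleness assumption on the central fiber, so that Li's conditional theorem becomes unconditional for spherical $(X,L)$.

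First I would recall the structure of Li's argument in more detail: $G$-uniform K-stability is equivalent to properness of a non-Archimedean Mabuchi-type functional modulo a maximal torus of $\Aut(X,L)$; by the variational method, this yields a minimizer in a suitable space of filtrations or geodesic rays. The finite generation property is precisely what allows one to replace such a filtration-theoretic minimizer by an honest $\bbC^{*}$-equivariant model, closing the gap between the non-Archimedean and Archimedean pictures. Once this is done, Chen-Cheng's a priori estimates upgrade the resulting weak minimizer of the Mabuchi functional to a genuine smooth cscK metric representing $c_{1}(L)$.

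Next I would apply Theorem~\ref{fg}. Since every model $(\mathcal{X},\mathcal{L})\to \bbP^{1}$ with general fiber a $G$-spherical manifold satisfies the finite generation conclusion, Li's implication applies verbatim to $(X,L)$. Uniqueness of the cscK metric modulo $\Aut_{0}(X,L)$ then follows from the general uniqueness theorem of Chen-Tian / Berman-Berndtsson; combined with the Matsushima-type rigidity implicit in $G$-(uniform) K-polystability, this gives uniqueness in the strong sense stated in the corollary.

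The main obstacle is not in this deduction, which is essentially a packaging of \cite{Li,His1,His2,CC2}, but in the proof of Theorem~\ref{fg}: one must prove finite generation of $\bigoplus_{m}\pi_{*}\mathcal{L}^{\otimes m}$ without ampleness of $\mathcal{L}$ on the central fiber. The hard part there will be to leverage the spherical structure on the general fiber, together with the $\bbC^{*}$-equivariance and isotriviality of $\pi$, to endow the total space $\mathcal{X}$ with a compatible $(G\times \bbC^{*})$-spherical structure, so that finite generation reduces to a combinatorial statement about colored fans and $\borel$-weight decompositions of section spaces.
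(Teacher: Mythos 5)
Your proposal matches the paper's proof: the paper deduces existence by combining Li's conditional result \cite[1.10]{Li} with Theorem~\ref{fg} (which supplies exactly the finite generation hypothesis for spherical fibers), and attributes uniqueness to Berman--Berndtsson \cite{BB}, just as you do. Your additional commentary on the internal structure of Li's variational argument and the Chen--Cheng estimates is accurate background but not part of the deduction itself, which is, as you say, essentially a packaging of the cited results around Theorem~\ref{fg}.
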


\begin{proof}[Proof of Theorem~\ref{fg}]
By applying the Eakin--Nagata theorem to the normalization of $\mathcal{X}$, 
we can and do assume $\mathcal{X}$ is normal. 
Because of the $G$-action and the given compatible $\bbC^*$-action in the horizontal direction 
on $\mathcal{X}$, it follows that $\mathcal{X}$ has the natural structure of  
a $(G\times \bbC^*)$-spherical variety. Indeed, 
the Borel subgroup of $G\times \bbC^{*}$ is simply $B(G)\times \bbC^*$ from the definition, where $B(G)$ denotes the original 
Borel subgroup of $G$, and it admits an open dense orbit inside $X\times \bbC^{*}
(\subset \mathcal{X})$ 
by the $G$-sphericality of $X$. 

By taking a $\bbC^{*}$-equivariant resolution of indeterminacy 
of $\mathcal{X}\dashrightarrow X\times \mathbb{P}^{1}$ 
as in \cite{RT, Od}, we can and do assume $\mathcal{X}$ is the blowup of 
a flag ideal, \textit{i.e.}, of dominating type in the terminology of \cite{Li}. 
Since replacing $\mathcal{L}$ by $\mathcal{L}\otimes p_2^* \mathcal{O}_{\bbP^{1}}(-c)$ 
for $c\in \mathbb{Z}$ does not affect the assertion, we can and do apply such a twist as follows. Here, $p_{i}$ denotes 
\supth{$i$} projection from $X\times \mathbb{P}^{1}$. 

From the finiteness of the irreducible components of $\mathcal{X}_{0}$, 
it easily follows that there exists a large enough $c'\in \mathbb{Z}_{>0}$ such that 
$$p_1^* L \otimes p_2^* \mathcal{O}_{\bbP^{1}}(-c')
\subset \mathcal{L}
\subset p_1^* L \otimes p_2^* \mathcal{O}_{\bbP^{1}}(c').$$ 
Twisting the above by $\mathcal{O}_{\bbP^{1}}(c')$ and letting $c:=2c'$, 
we can and do assume 
\begin{align}\label{inclusion}
p_1^* L \subset \mathcal{L} \subset 
p_1^* L \otimes p_2^* \mathcal{O}_{\bbP^{1}}(c).\end{align}
In any case, it immediately follows from the above that 
the filtration associated to 
$\mathcal{L}$ is linearly bounded in the sense of \cite{Sz15}. 

Since we confirmed that $\mathcal{X}$ is a spherical variety, 
it is also a Mori dream space in the sense of \cite{HK} due to \cite{BK} (\textit{cf.}\ \cite[p.~340]{HK}). 
Therefore, it follows that 
$\oplus_{m\ge 0} H^0(\mathcal{X}, \mathcal{L}^{\otimes m})$ is a finitely generated 
graded $\bbC$-algebra. 
%new
Then, by the natural $\bbC^{*}$-action on it which is induced by 
the $\mathbb{G}_{m}$-action on $(\mathcal{X},\mathcal{L})$, 
the complete reductivity of $\mathbb{G}_{m}$ implies that 
we can take a set of finite generators as eigenvectors 
of the form 
$\mathcal{S}=\{(p_{1}^{*}s_{i}^{(m,l)})t^{l}\}_{l\le 0,m,i}$. 
Here, $t$ denotes the homogeneous coordinate of $\mathbb{P}^{1}$ 
which vanishes at the origin with order one, which we also 
identify with $p_{2}^{*}t$ on $\mathcal{X}$, and 
the indices are of the form  
$-cm\le l\le 0$ and $1\le i\le a_{m,l}$ for a double sequence of 
positive integers $a_{m,l}$ such that 
$$0=a_{m,-cm-1}\le a_{m,-cm}\le \cdots \le 
a_{m,0}=a_{m,1}=\cdots=h^{0}(X,L^{\otimes m})$$
because of \eqref{inclusion}. 
(In the case of test configurations, \textit{i.e.}, when $\mathcal{L}$ is 
relatively ample, 
these $\{a_{m,l}\}$ are determined by $\{\lambda_{a,k}\}$ 
in the terminology of Section~\ref{sec_stability} of this paper.) 
The proof follows from standard arguments, so we omit it, but 
see and compare with \cite{Od} for instance. 
Similarly, the 
set
\begin{align*}
(\mathcal{S}\subset\,)\tilde{\mathcal{S}}&=
\left\{\left(p_{1}^{*}s_{i}^{(m,\min\{l,0\})}\right)t^{l}\right\}_{m,l\in \mathbb{Z},i}, 
\end{align*}
in which we allow the integer index 
$l$ to be not necessarily negative, generates  
$\oplus_{m\ge 0}\pi_{*}\mathcal{L}^{\otimes m}$ as a 
$\mathcal{O}_{\mathbb{P}^{1}}$-algebra.
Note that 
\begin{align*}
\tilde{\mathcal{S}}
=\mathcal{S}\sqcup \left\{\left\{(p_{1}^{*}s_{i}^{(m,0)})t^{l}\right\}_{l> 0,m,i}\right\}
=\mathcal{S}\sqcup \left(t\bbC[t] \left\{\left\{(p_{1}^{*}s_{i}^{(m,0)})\right\}_{m,i}\right\}\right). 
\end{align*}
Thus, 
we can particularly take its finite subset 
$$\overline{\mathcal{S}}:=\mathcal{S}\sqcup 
\left\{\left\{(p_{1}^{*}s_{i}^{(m,0)})\right\}_{m,i} (\subset \tilde{\mathcal{S}})\right\},$$ 
which still generates 
$\oplus_{m\ge 0}\pi_{*}\mathcal{L}^{\otimes m}$ as a graded 
$\mathcal{O}_{\mathbb{P}^{1}}$-algebra. 
This completes the proof of Theorem~\ref{fg}. 
%Or discuss relatively over $\PP^1$. 
\end{proof}

\begin{proof}[Proof of Corollary~\ref{sph.YTD}]
  The result of \cite[Theorem~1.10]{Li}
  combined with  Theorem~\ref{fg} 
readily imply the existence part of Corollary~\ref{sph.YTD}. 
The uniqueness part is due to \cite{BB} for general cscK metrics. 
\end{proof}

Note that in the toric case, \textit{i.e.}, when $G$ is an algebraic
torus, Corollary~\ref{sph.YTD} was known before as a result of
\cite{His1} combined with \cite{CC, CC2}.  Our approach above extends
\cite[Theorem~1.12]{Li} by some part of the theory of the Mori dream
space \cite{BK, HK}.

%%%%%%%%%%%%%%%%%%%%%%%%%%%%%%%%%%%%

\newcommand{\etalchar}[1]{$^{#1}$}
\providecommand{\bysame}{\leavevmode\hbox to3em{\hrulefill}\thinspace}
\providecommand{\MR}{\relax\ifhmode\unskip\space\fi MR }
% \MRhref is called by the amsart/book/proc definition of \MR.
\providecommand{\MRhref}[2]{%
  \href{http://www.ams.org/mathscinet-getitem?mr=#1}{#2}
}
\providecommand{\href}[2]{#2}


\begin{thebibliography}{ACG{\etalchar{+}}08+++}

\bibitem[AB04]{AB04}
V.~Alexeev and M.~Brion, \emph{Stable reductive varieties. {II}. {P}rojective
  case}, Adv.\ Math.\ \textbf{184} (2004), no.~2, 380--408.

\bibitem[AK05]{AK05}
V.~Alexeev and L.~Katzarkov, \emph{On {$K$}-stability of reductive varieties},
Geom.\ Funct.\ Anal.\ \textbf{15} (2005), no.~2, 297--310.

\bibitem[ACG{\etalchar{+}}08]{ACGT08}
V.~Apostolov, D.\,M.\,J.~Calderbank, P.~Gauduchon, and C.\,W.~T{\o}nnesen-Friedman, \emph{Hamiltonian 2-forms in {K}\"{a}hler geometry.
  {III}. {E}xtremal metrics and stability}, Invent.\ Math.\ \textbf{173} (2008),
  no.~3, 547--601.


\bibitem[BB17]{BB}
R.\,J.~Berman and B.~Berndtsson, \emph{Convexity of the {$K$}-energy on the
  space of {K}\"{a}hler metrics and uniqueness of extremal metrics}, J.~Amer.\
  Math.\ Soc.\ \textbf{30} (2017), no.~4, 1165--1196.

\bibitem[BHJ17]{BHJ17}
S.~Boucksom, T.~Hisamoto, and M.~Jonsson, \emph{Uniform {K}-stability,
  {D}uistermaat-{H}eckman measures and singularities of pairs}, Ann.\ Inst.\
  Fourier (Grenoble) \textbf{67} (2017), no.~2, 743--841.

\bibitem[Bri89]{Bri89}
M.~Brion, \emph{Groupe de {P}icard et nombres caract\'{e}ristiques des
  vari\'{e}t\'{e}s sph\'{e}riques}, Duke Math.~J.\ \textbf{58} (1989), no.~2,
  397--424.

\bibitem[Bri97]{Bri87}
\bysame, \emph{Curves and divisors in spherical varieties}, in: \emph{Algebraic groups  and {L}ie groups}, pp.~21--34, Austral.\ Math.\ Soc.\ Lect.\ Ser., vol.~9, Cambridge Univ.\  Press, Cambridge, 1997.
  
\bibitem[BK94]{BK}
M.~Brion and F.~Knop, \emph{Contractions and flips for varieties with group
  action of small complexity}, J.~Math.\ Sci.\ Univ.\ Tokyo \textbf{1} (1994),
  no.~3, 641--655.

\bibitem[CC18]{CC2}
X.~{Chen} and J.~{Cheng}, \emph{{On the constant scalar curvature K\"ahler
  metrics, general automorphism group}}, preprint \arXiv{1801.05907}, 2018.


\bibitem[CC21]{CC}
\bysame, \emph{On the constant scalar curvature {K{\"a}hler}
  metrics. {II}: {Existence} results}, J.~Amer.\ Math.\ Soc.\ \textbf{34} (2021),  no.~4, 937--1009.

  
\bibitem[Del20a]{DelKSSV}
T.~{Delcroix}, \emph{{K-stability of Fano spherical varieties}}, Ann.\ Sci.\ \'{E}c.\  Norm.\ Sup\'{e}r.\ (4) \textbf{3} (2020), no.~53, 615--662.

\bibitem[{Del}20b]{RK1}
\bysame, \emph{{The Yau-Tian-Donaldson conjecture for cohomogeneity one
  manifolds}}, preprint \arXiv{2011.07135}, 2020.


\bibitem[{Del}22]{infinite}
\bysame, \emph{{Examples of K-unstable Fano manifolds}}, Ann.\ Inst.\ Fourier \textbf{72} (2022), no.~5, 2079--2108.
  
\bibitem[DH21]{DH}
T.~{Delcroix} and J.~{Hultgren}, \emph{{Coupled complex Monge-Amp\`ere
  equations on Fano horosymmetric manifolds}}, {J.~Math.\ Pures Appl.\ (9)}
\textbf{153} (2021), 281--315.

\bibitem[Der16]{Der16}
R.~Dervan, \emph{Uniform stability of twisted constant scalar curvature
  {K}\"{a}hler metrics}, Int.\ Math.\ Res.\ Not.\ IMRN (2016), no.~15, 4728--4783.

\bibitem[Don02]{Don02}
  S.\,K.~Donaldson, \emph{Scalar curvature and stability of toric varieties},
  J.~Differential Geom.\ \textbf{62} (2002), no.~2, 289--349.

\bibitem[Don09]{Don09}
\bysame, \emph{Constant scalar curvature metrics on toric surfaces}, Geom.\
  Funct.\ Anal.\ \textbf{19} (2009), no.~1, 83--136.

\bibitem[GH15a]{GH15Fano}
G.~Gagliardi and J.~Hofscheier, \emph{Gorenstein spherical {F}ano varieties},
  Geom.\ Dedicata \textbf{178} (2015), 111--133.

\bibitem[GH15b]{GH15homo}
\bysame, \emph{Homogeneous spherical data of orbits in spherical embeddings},
  Transform.\ Groups \textbf{20} (2015), no.~1, 83--98.

\bibitem[{His}19]{His2}
T.~{Hisamoto}, \emph{{Mabuchi's soliton metric and relative D-stability}},
  preprint \arXiv{1905.05948}, 2019.

\bibitem[{His}20]{His1}
\bysame, \emph{{Stability and coercivity for toric polarizations}},
  preprint \arXiv{1610.07998v3}, 2020.

\bibitem[HK00]{HK}
  Y.~Hu and S.~Keel, \emph{Mori dream spaces and GIT}, Mich.\ Math.~J.\ \textbf{.~48}, Spec.\ Vol.\ (2000), 331--348. 

\bibitem[Kno91]{Kno91}
F.~Knop, \emph{The {L}una-{V}ust theory of spherical embeddings}, in: \emph{Proceedings  of the {H}yderabad {C}onference on {A}lgebraic {G}roups} ({H}yderabad, 1989), pp.~225--249, Manoj Prakashan, Madras, 1991.
  
\bibitem[{Li}22]{Li}
C.~{Li}, \emph{{Geodesic rays and stability in the cscK problem}}, Ann.\ Sci.\ \'Ec.\ Norm.\  Sup\'{e}r.\ (4) \textbf{55} (2022), no.~6, 1529--1574.
  
\bibitem[LZZ18]{LZZ18}
Y.~Li, B.~Zhou, and X.~Zhu, \emph{K-energy on polarized compactifications of
  {L}ie groups}, J.~Funct.\ Anal.\ \textbf{275} (2018), no.~5, 1023--1072.

\bibitem[McM77]{McM77}
P.~McMullen, \emph{Valuations and {E}uler-type relations on certain classes of
convex polytopes}, Proc.\ London Math.\ Soc.\ (3) \textbf{35} (1977), no.~1,
  113--135.

\bibitem[{Nyb}]{Nyb}
T.~{Nyberg}, \emph{{Constant Scalar Curvature of Toric Fibrations}}, Ph.D.~thesis, Columbia University, 2014. Available from \url{https://academiccommons.columbia.edu/doi/10.7916/D8TH8JVH}
  
\bibitem[Oda13]{Od}
  Y.~Odaka, \emph{A generalization of the {R}oss-{T}homas slope theory}, Osaka
  J.~Math.\ \textbf{50} (2013), no.~1, 171--185.

\bibitem[PK92]{PK92a}
A.\,V.~Pukhlikov and A.\,G.~Khovanski\u{\i}, \emph{Finitely additive measures of
  virtual polyhedra}, Algebra i Analiz \textbf{4} (1992), no.~2, 161--185.

\bibitem[RT07]{RT}
J.~Ross and R.~Thomas, \emph{A study of the {H}ilbert-{M}umford criterion for
  the stability of projective varieties}, J.~Algebraic Geom.\ \textbf{16}
  (2007), no.~2, 201--255.

\bibitem[Sz{\'e}15]{Sz15}
G.~Sz{\'e}kelyhidi, \emph{Filtrations and test-configurations} (with an appendix by Sebastien Boucksom), Math.\ Ann.\
\textbf{362} (2015), no.~1-2, 451--484.

\bibitem[ZZ08]{ZZ08}
B.~Zhou and X.~Zhu, \emph{Relative {$K$}-stability and modified {$K$}-energy on
  toric manifolds}, Adv.\ Math.\ \textbf{219} (2008), no.~4, 1327--1362.

\end{thebibliography}
\end{document}